\newtheorem{theorem}{Theorem}
\newtheorem{proposition}{Proposition}[section]
\newtheorem{lemma}[proposition]{Lemma}
\newcommand{\be}{\begin{equation}}
\newcommand{\ee}{\end{equation}}
\newcommand{\nin}{\noindent}
\DeclareMathOperator{\col}{col}
\DeclareMathOperator{\E}{E}
\DeclareMathOperator{\Var}{Var}
\newcommand{\pF}{{\Pr}_{f}}
\newcommand{\pf}{{\Pr}_{p}}
\begin{document}

\title{Distinguishing a truncated random permutation from a random function}

\author{Shoni Gilboa\thanks{Mathematics Department, The Open University of Israel, Raanana 43107, Israel. Email: \texttt{tipshoni@gmail.com}}
\and
Shay Gueron  \thanks{Mathematics Department, University of Haifa, Haifa 31905, Israel} \thanks{ Intel Corporation} 
}

\maketitle

\begin{abstract}
An oracle chooses a function $f$ from the set of $n$ bits strings to itself, which is either a randomly chosen permutation or a randomly chosen function. When queried by an $n$-bit string $w$, the oracle computes $f(w)$, truncates the $m$ last bits, and returns only the first $n-m$ bits of $f(w)$. How many queries does a querying adversary need to submit in order to distinguish the truncated permutation from a random function? 

In 1998, Hall et al.~\cite{Hall} showed an algorithm for determining (with high probability) whether or not $f$ is a permutation, using $O(2^{\frac{m+n}{2}})$ queries. They also showed that if $m < n/7$, a smaller number of queries will not suffice. For $m > n/7$, their method gives a weaker bound.
In this manuscript, we show how a modification of the method used by Hall et al. can solve the porblem completely. It extends the result to essentially every $m$, showing that $\Omega(2^{\frac{m+n}{2}})$ queries are needed to get a non-negligible distinguishing advantage. 
We recently became aware that a better bound for the distinguishing advantage, for every $m<n$, follows from a result of Stam \cite{Stam} published, in a different context, already in 1978. 
\end{abstract}

{\small
\begin{quote}
\textbf{Keywords:} Pseudo random permutations, pseudo random functions, advantage.
\end{quote}}

\section{Introduction}
Distinguishing a randomly chosen permutation from a random function is a combinatorial problem which is fundamental in cryptology. A few examples where this problem plays an important role are the security analysis of block ciphers, hash and MAC schemes.
 
One formulation of this problem is the following. An oracle chooses a function $f: \{0, 1\}^n \to \{0, 1\}^n$, which is either a randomly (uniformly) chosen permutation of $\{0, 1\}^n$, or a randomly (uniformly) chosen function from $\{0, 1\}^n$ to $\{0, 1\}^n$. An adversary selects a ``querying and guessing'' algorithm. He first uses it to submit $q$ (adaptive) queries to the oracle, and the oracle responds with $f(w)$ to the query $w \in \{0, 1\}^n$. After collecting the $q$ responses, the adversary uses his algorithm to guess whether or not $f$ is a permutation.  The quality of such an algorithm (in the cryptographic context) is the ability to distinguish between the two cases (rather than successfully guessing which one it is). It is measured by the difference between the probability that the algorithm outputs a certain answer, given that the oracle chose a permutation, and the probability that the algorithm outputs the same answer, given that the oracle chose a function. This difference is called the "advantage" of the algorithm. We are interested in estimating \emph{Adv}, which is the maximal advantage of the adversary, over all possible algorithms, as a function of a budget of $q$ queries. 

The well known answer to this problem is based on the simple ``collision test'' and the Birthday Problem:
$$Adv=1-\left(1-\frac{1}{2^n}\right)\left(1-\frac{2}{2^n}\right)\ldots\left(1-\frac{q-1}{2^n}\right).$$
Since for every $1\leq k\leq q-1$
$$1-\frac{q}{2^n}\leq\left(1-\frac{k}{2^n}\right)\left(1-\frac{q-k}{2^n}\right)\leq\left(1-\frac{q}{2^{n+1}}\right)^2,$$
we get, for $q\leq 2^n$, that
\begin{equation}\label{birthday}1-e^{-\frac{q(q-1)}{2^{n+1}}}\leq 1-\left(1-\frac{q}{2^{n+1}}\right)^{q-1}\leq Adv \leq 1-\left(1-\frac{q}{2^n}\right)^{\frac{q-1}{2}}\leq\frac{q(q-1)}{2^{n+1}}.\end{equation}
This result implies that the number of queries required to distinguish a random permutation from a random function, with success probability significantly larger than, say, $1/2$, is $\Theta(2^{n/2})$.

We now consider the following generalization of this problem: \\

\nin {\bf Problem:}
Let $0 \le m<n$ be integers. An oracle chooses $c \in \{0, 1\}$. If $c = 1$, it picks a permutation $p$ of $\{0, 1\}^n$ uniformly at random, and if $c = 0$, it picks a function $f: \{0, 1\}^n \to \{0, 1\}^n$ uniformly at random. 
An adversary is allowed to submit queries $w \in \{0, 1\}^n$ to the oracle. The oracle computes $\alpha = p(w)$ (if $c=1$) or $\alpha = f(w)$ (if $c=0$), truncates (with no loss of generality) the last $m$ bits from $\alpha$, and replies with the remaining $(n-m)$ bits. The adversary has a budget of $q$ (adaptive) queries, and after exhausting this budget, is expected to guess $c$. 
{\it How many queries does the adversary need in order to gain non-negligible advantage?} \\
Specifically, we seek $q_{1/2}=\min\{q\mid Adv\geq 1/2\}$ as a function of $m$ and $n$. \\

This problem was studied by Hall et al.~\cite{Hall} in 1998. The authors showed (for every $m$) an algorithm that gives a non-negligible distinguishing advantage using $q = O (2^{(n+m)/2})$ queries. They also proved the following upper bound:
\begin{equation}\label{Hall_result}
Adv \le 5\left(\frac{q}{2^{\frac{n+m}{2}}}\right)^{\frac{2}{3}}+\frac{1}{2}\left(\frac{q}{2^{\frac{n+m}{2}}}\right)^3\frac{1}{ 2^{\frac{n-7m}{2}}}.
\end{equation}
For $m\leq n/7$, \eqref{Hall_result} implies that $q_{1/2}=\Omega(2^{\frac{m+n}{2}})$. However, for larger values of $m$, the bound on $q_{1/2}$ that is offered by \eqref{Hall_result} deteriorates, and eventually becomes (already for $m>n/4$) worse than the trivial "Birthday" bound $q_{1/2}=\Omega(2^{n/2})$, which is obviously still valid when the adversary gets only partial (truncated) replies from the oracle.

Hall et al.~\cite{Hall} conjectured that $\Omega(2^{\frac{m+n}{2}})$ queries are needed in order to get a non-negligible advantage, in the general case. 
Surprisingly, it turns out that this was already established $20$ years before the conjecture was raised, in a different context. It follows from the bound
\begin{equation}\label{eq:Stam}Adv\leq\frac{1}{2}\sqrt{\frac{(2^{n-m}-1)q(q-1)}{(2^n-1)(2^n-(q-1)}}\leq\frac{1}{2\sqrt{1-\frac{q-1}{2^n}}}\cdot\frac{q}{2^{\frac{n+m}{2}}},\end{equation}
valid for all $0\leq m<n$, which is a direct consequence of a result of Stam \cite[Theorem 2.3]{Stam} (see also \cite{GGM}). 

In this manuscript we show how the method of proof used in \cite{Hall} can be modified to show the lower bound $q_{1/2}=\Omega(2^{\frac{m+n}{2}})$ for virtually every $m<n$. 
The result follows from explicit upper bounds on \emph{Adv} stated in the following two theorems.

\begin{theorem}
\label{friendly_theorem}
If $m\leq n/3$ then
\begin{equation}\label{thm1}Adv\leq 2\sqrt[3]{2}\left(\frac{q}{2^{\frac{n+m}{2}}}\right)^{2/3}+\frac{2\sqrt{2}}{\sqrt{3}}\left(\frac{q}{2^{\frac{n+m}{2}}}\right)^{3/2}+\left(\frac{q}{2^{\frac{n+m}{2}}}\right)^2.\end{equation}
\end{theorem}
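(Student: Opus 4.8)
The plan is to combine the standard reduction to a statistical–distance computation with the collision–counting analysis of \cite{Hall}, carried out with enough care on the error terms to cover all $m\le n/3$. Write $N=2^n$, $L=2^m$, $B=2^{n-m}$ (so $N=BL$) and abbreviate the natural scale by $x:=q/2^{\frac{n+m}{2}}$; there are $B$ possible truncated answers, each the image of exactly $L$ inputs under a permutation. A repeated query carries no information and both oracles' laws are invariant under relabelling the inputs and the answers, so it is standard that $Adv$ equals the total variation distance $\|\pf-\pF\|$ between the laws of the sequence of answers returned on $q$ distinct queries. Under $\pF$ these answers are i.i.d.\ uniform on the $B$ buckets, while under $\pf$ they are the buckets of $q$ inputs sampled without replacement. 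Encoding an answer–sequence by its occupancy vector $(C_1,\dots,C_B)$, a direct count yields the likelihood ratio
\be Z:=\frac{\pf}{\pF}=\frac{N^q}{(N)_q}\prod_{j=1}^{B}\frac{(L)_{C_j}}{L^{C_j}},\qquad (a)_k:=a(a-1)\cdots(a-k+1),\ee
so that $\E_f[Z]=1$ and $Adv=\tfrac12\,\E_f|Z-1|$, where $\E_f$ (resp.\ $\E_p$) denotes expectation under $\pF$ (resp.\ $\pf$).

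Next I would reduce $Z$ to the single statistic $S:=\sum_{j=1}^{B}\binom{C_j}{2}$, the number of colliding pairs among the queries. Taking logarithms and expanding $\log(1-t)=-t+O(t^2)$ in each factor shows that $\log Z=\binom{q}{2}/N-S/L$ up to an explicit remainder built from the quadratic and higher terms. I would then compute the first two moments of $S$: under $\pF$ the collision indicators are pairwise uncorrelated, giving $\E_f[S]=\binom{q}{2}/B$ and $\Var_f(S)=\binom{q}{2}\frac1B\bigl(1-\frac1B\bigr)$, and the parallel hypergeometric computation under $\pf$ gives $\E_p[S]=\binom{q}{2}\frac{L-1}{N-1}$ with a comparable variance. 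The whole signal is the gap $\E_f[S]-\E_p[S]=\binom{q}{2}\frac{N-L}{N(N-1)}$, and dividing it by $\sqrt{\Var_f(S)}$ reproduces the scale $x$.

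The heart of the argument is a truncation of $Z$. For any level $\lambda>1$, splitting an arbitrary distinguishing event according to whether $Z\le\lambda$ gives
\be Adv\le \E_f\!\bigl[(Z-1)^{+}\mathbf 1_{\{Z\le\lambda\}}\bigr]+\pf(Z>\lambda).\ee
I would bound the first term linearly in $\lambda-1$, using $(Z-1)^{+}\mathbf 1_{\{Z\le\lambda\}}\le(\lambda-1)\mathbf 1_{\{Z>1\}}$, and the tail by Chebyshev's inequality: since $\{Z>\lambda\}$ forces $S$ to lie well below $\E_p[S]$, the variance bound for $S$ yields $\pf(Z>\lambda)=O\!\left(x^{2}/(\lambda-1)^{2}\right)$. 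Writing $\lambda-1=\varepsilon$ and minimizing $\varepsilon+c\,x^{2}\varepsilon^{-2}$ over $\varepsilon$ forces $\varepsilon\sim x^{2/3}$, which is exactly what produces the leading power $x^{2/3}$ of \eqref{thm1}. The two lower-order summands $x^{3/2}$ and $x^{2}$ then emerge from the remainder in the expansion of $\log Z$ and from the event that some bucket receives many queries.

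The main obstacle is not the moment computation but the uniform control of these remainders: one has to show that replacing $\prod_j (L)_{C_j}/L^{C_j}$ by $e^{-S/L}$ and discarding the heavily loaded buckets costs only genuine powers of $x$, rather than a factor growing with $m$. This is precisely where the hypothesis $m\le n/3$ enters, and where the method improves on \eqref{Hall_result}: Hall et al.'s analysis leaves the remainder $\tfrac12 x^{3}2^{-(n-7m)/2}$, which degrades once $m>n/7$, whereas keeping the corrections as the powers $3/2$ and $2$ remains valid throughout $m\le n/3$. I would stress that this collision–counting route is deliberately lossy: its leading exponent $2/3$ is worse than the exponent $1$ of \eqref{eq:Stam}; indeed, evaluating $\E_f[Z^2]=\E_p[Z]$ exactly and invoking $Adv\le\tfrac12\sqrt{\E_f[Z^2]-1}$ recovers Stam's sharper bound. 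Its merit is that it is elementary and self-contained.
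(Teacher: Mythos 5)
Your proposal is correct in outline, and it is \emph{not} the paper's proof: the two differ in a structurally meaningful way. The paper uses the typical-set bound $Adv\le\max_{\omega\in S}\left\lvert\pf(\omega)/\pF(\omega)-1\right\rvert+\pF(\bar S)$ (Lemma \ref{advantage}) with $S$ defined by two-sided control of $\col_2$ together with an upper bound on $\col_3$; the $\col_3$ (heavy-bucket) control is forced because bounding $1-\pf/\pF$ requires an upper bound on $-\ln(\pf/\pF)$ (Lemma \ref{ln}), and it is what produces the $x^{3/2}$ term, where $x:=q/2^{(n+m)/2}$; in exchange, all probabilistic estimates (Lemmas \ref{collisions} and \ref{chebyshev}) are taken under the i.i.d.\ measure $\pF$, where moments of $\col_j$ are easy. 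Your decomposition $Adv=\E_{f}[(Z-1)^{+}]\le(\lambda-1)+\pf(Z>\lambda)$, with $Z=\pf/\pF$, is genuinely one-sided: only the upper tail of $Z$ (too few collisions) matters, so heavy buckets and $\col_3$ never enter at all --- the inequality $\ln(1-t)\le-t$ bounds $\ln Z$ from above by $\binom{q}{2}2^{-n}-S2^{-m}$ plus a remainder depending only on $q,n$, and the hypothesis $m\le n/3$ enters exactly where you say, to make $2^{m}q^{3}2^{-2n}\le 2^{m}x^{3}$ negligible against the threshold $2^{m}\varepsilon\sim 2^{m}x^{2/3}$. The price of the one-sidedness is that your tail probability lives under the permutation measure, so the Chebyshev step needs $\Var_{p}(S)$ for sampling without replacement; this is the one ingredient you assert (``a comparable variance'') rather than prove, and it is a genuine computation: overlapping pair-indicators are negatively correlated, disjoint ones are positively correlated with covariance of order $2^{m}/2^{3n}$ each, and the resulting total of order $q^{4}2^{m}/2^{3n}$ must be (and is) dominated by $\E_{p}S\approx q^{2}2^{m}/2^{n+1}$, by a factor $q^{2}/2^{2n}$. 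Two further caveats. First, your attribution of the $x^{3/2}$ and $x^{2}$ terms to ``the event that some bucket receives many queries'' contradicts your own reduction: heavy buckets only make $Z$ small, which $(Z-1)^{+}$ ignores; in your scheme the corrections come instead from the mean gap $\E_{f}S-\E_{p}S=\binom{q}{2}\frac{N-L}{N(N-1)}$, from $\ln(1+\varepsilon)<\varepsilon$, and from the remainder above. Second, to land the literal inequality \eqref{thm1} you would still need the paper's trivial-regime step (using $Adv\le 1$ to dispose of $q\ge c\,2^{(n+m)/2}$) so that these corrections can be absorbed into the stated terms; carried out carefully, your route actually yields a smaller leading constant, since minimizing $\varepsilon+x^{2}/(2\varepsilon^{2})$ gives $\tfrac{3}{2}x^{2/3}$ in place of $2\sqrt[3]{2}\,x^{2/3}$.
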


\begin{theorem}\label{friendly_theorem2}
If $n/3<m\leq n-4-\log_2 n$ then
\begin{equation}\label{thm2}Adv\leq 3\left(\frac{q}{2^{\frac{n+m}{2}}}\right)^{2/3}+2\left(\frac{q}{2^{\frac{n+m}{2}}}\right)+5\left(\frac{q}{2^{\frac{n+m}{2}}}\right)^2+\frac{1}{2}\left(\frac{2q}{2^{\frac{n+m}{2}}}\right)^{\frac{n}{n-m}}.\end{equation}
\end{theorem}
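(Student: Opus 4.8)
The plan is to recast the question as a total variation computation between two explicit laws and then to bound that distance by the same truncation-plus-second-moment scheme that produces the bound \eqref{Hall_result}. Write $M=2^{n-m}$ for the number of possible truncated answers (``bins'') and $L=2^{m}$ for the number of full strings mapping into each bin. First I would remove adaptivity: re-querying a point yields nothing new, and for any $q$ distinct queries the joint law of the answers is invariant under relabelling $\{0,1\}^{n}$, so an optimal adversary may fix $q$ distinct queries and decide using only the histogram $(N_{1},\dots,N_{M})$ of answers per bin. Under the random function this histogram is multinomial ($q$ balls thrown independently and uniformly into $M$ bins), while under the random permutation it is multivariate hypergeometric ($q$ balls drawn without replacement from $M$ colours carrying $L$ balls each). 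Hence $Adv$ is exactly the total variation distance between these laws, which I would write as $Adv=\E_{f}\big[(R-1)^{+}\big]$, where $R$ is the likelihood ratio of the observed histogram and has the closed form
\[ R \;=\; \frac{\displaystyle\prod_{j=1}^{M}\prod_{i=0}^{N_{j}-1}\Big(1-\tfrac{i}{L}\Big)}{\displaystyle\prod_{i=0}^{q-1}\Big(1-\tfrac{i}{ML}\Big)}. \]

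Next I would linearise $\log R$. Writing $C=\sum_{j=1}^{M}\binom{N_{j}}{2}$ for the number of colliding pairs and using $\log(1-\tfrac{i}{L})=-\tfrac{i}{L}+O(\tfrac{i^{2}}{L^{2}})$, valid as long as every $N_{j}$ stays below a threshold $T$ far below $L$, the numerator contributes $-C/L$ to leading order while the deterministic denominator contributes $+\E_{f}[C]/L=\binom{q}{2}/(ML)$ up to the same order; the two cancel, leaving
\[ \log R \;=\; -\frac{1}{L}\big(C-\E_{f}[C]\big)\;+\;(\text{higher-order corrections}). \]
Here the corrections are controlled sums of $N_{j}^{3}/L^{2}$ and lower, which on the threshold event are $O\!\big(\sum_{j}N_{j}^{3}/L^{2}\big)$; these are what eventually produce the intermediate terms $2\,(q/2^{(n+m)/2})$ and $5\,(q/2^{(n+m)/2})^{2}$ in \eqref{thm2}.

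With $x:=q/2^{(n+m)/2}$, I would then split, for a free parameter $\delta>0$ and the overflow event $B=\{\max_{j}N_{j}\ge T\}$,
\[ Adv \;\le\; (e^{\delta}-1)\;+\;\pf(\log R>\delta)\;+\;\pf(B), \]
using $(R-1)^{+}\le e^{\delta}-1$ where $\log R\le\delta$ and $\E_{f}[R\,\mathbf{1}_{A}]=\pf(A)$ elsewhere. The event $\{\log R>\delta\}$ forces $C$ to fall below its permutation mean by roughly $L\delta$, so Chebyshev gives $\pf(\log R>\delta)\lesssim \Var_{p}(C)/(L\delta)^{2}\approx x^{2}/(2\delta^{2})$, since $\Var_{p}(C)\approx\binom{q}{2}/M$. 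Choosing $\delta\asymp x^{2/3}$ balances $e^{\delta}-1\approx\delta$ against $x^{2}/(2\delta^{2})$ and yields the main term $3\,(q/2^{(n+m)/2})^{2/3}$. Finally, a union bound over the $M$ bins controls $\pf(B)$; the hypothesis $m\le n-4-\log_{2}n$, i.e.\ $M\ge 16n$, is exactly what makes this union bound affordable, and optimising the threshold $T$ produces the last term $\tfrac{1}{2}\big(2q/2^{(n+m)/2}\big)^{n/(n-m)}$ of \eqref{thm2}.

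I expect the main obstacle to be the second-moment estimate for $C$ under the permutation (without-replacement) law in the high-occupancy regime $q/M\gg1$, which is exactly the regime forced by $m>n/3$ and is what makes Theorem~\ref{friendly_theorem2} genuinely different from Theorem~\ref{friendly_theorem}. Here the naive bound $\Var_{p}(C)\le\sum_{j}\Var_{p}\binom{N_{j}}{2}$ is far too large (each term is of order $(q/M)^{3}$), and the correct order $\binom{q}{2}/M$ emerges only after exploiting the strong negative correlation enforced by $\sum_{j}N_{j}=q$, which cancels the leading fluctuation. Carrying this cancellation through rigorously, while simultaneously keeping the higher-order Taylor corrections and the overflow threshold $T$ mutually consistent, is the delicate part; everything else is bookkeeping to pin down the explicit constants $3,2,5,\tfrac{1}{2}$ appearing in \eqref{thm2}.
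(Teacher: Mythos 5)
Your decomposition $Adv\le(e^{\delta}-1)+\pf(\log R>\delta)+\pf(B)$ is a legitimate variant of the paper's Lemma \ref{advantage}, and the overall skeleton (likelihood ratio, good event, Chebyshev/Markov, optimize) is the right family of argument. But the step where you dispose of the higher-order terms in $\log R$ is precisely where the argument fails for $m>n/3$, and handling it is the entire point of this theorem. After centering only the pair-collision count $C=\col_2(\omega)$, the next correction is not small: writing $x=q/2^{\frac{n+m}{2}}$, its typical size is $\E_{f}[\col_3]/2^{2m}=\binom{q}{3}/2^{2n}\approx\frac{1}{6}x^{3}2^{\frac{3m-n}{2}}$, which diverges as $n\to\infty$ once $m>n/3$. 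Your proposed bound $O\bigl(\sum_{j}N_{j}^{3}/L^{2}\bigr)$ on the event $\{\max_{j}N_{j}<T\}$ is \emph{uncentered} and inherits this divergence: on that event the correction is of order $Tx^{2}$, while $\max_{j}N_{j}\ge q/M$ holds deterministically, so $\pf(B)$ can be small only if $T\gtrsim q/M=x2^{\frac{3m-n}{2}}$, whence $Tx^{2}\gtrsim x^{3}2^{\frac{3m-n}{2}}$ again; no choice of $T$ makes both quantities small. The missing idea is that the order-$j$ correction has a deterministic part, $\frac{(j-1)!}{2^{jm}}\binom{q}{j+1}\frac{1}{2^{j(n-m)}}$, which cancels against the corresponding term in the expansion of the denominator $\prod_{i=0}^{q-1}(1-i/2^{n})$; one must therefore center \emph{every} statistic $\col_{j+1}$, $1\le j\le t-1$, include all of them in the good event, control each by Chebyshev, and truncate the Taylor expansion at a depth $t$ large enough that the remainder (handled by Markov's inequality on $\col_{t+1}$) is negligible. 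The required depth is $t=\lceil\frac{n+m}{n-m}\rceil$, and it is this choice---not a max-load union bound---that produces the last term $\frac{1}{2}\bigl(2q/2^{\frac{n+m}{2}}\bigr)^{\frac{n}{n-m}}$ of \eqref{thm2}. Likewise, the hypothesis $m\le n-4-\log_{2}n$ is there to keep $t$ (which can be as large as roughly $2n/\log_{2}n$) negligible compared with $2^{n-m}\ge 16n$ in several error estimates, not to make a union bound over bins affordable.

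Two secondary points. First, your tail bounds are taken under the permutation law, so you would need $\Var_{p}(C)\lesssim\binom{q}{2}/M$ for sampling without replacement; you flag this as the main obstacle, but it is avoidable altogether: the paper's Lemma \ref{advantage} is deliberately asymmetric---a ratio bound on $S$ plus $\pF(\bar{S})$---so only moments under the random-\emph{function} law are ever needed, and those are elementary (Lemma \ref{collisions}; for $\col_2$ the covariance terms vanish identically, giving $\Var_{f}\col_2\le\binom{q}{2}/2^{n-m}$ with no negative-dependence argument). Second, and consequently, you have misidentified the real difficulty: it is not the second-moment estimate for $C$ in the high-occupancy regime, but the cascade of centered higher-order collision statistics described above. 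As written, your plan collapses to the $t=2$ argument of Hall et al.\ (essentially the proof of Theorem \ref{friendly_theorem}), which provably cannot reach $m>n/3$.
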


The proofs of Theorem \ref{friendly_theorem} and Theorem \ref{friendly_theorem2} are given in Section \ref{sec:main_theorem} and Section \ref{sec:main_theorem2}, respectively.
The proofs follow the same line, but the proof of Theorem \ref{friendly_theorem2} is more elaborate and technical. 

\section{Notation and preliminaries}\label{sec3}

For fixed $m<n$ and $q\leq 2^n$ we denote $\Omega:=\left(\{0,1\}^{n-m}\right)^q$. We view $\Omega$ as the set of all possible sequences of replies that can be given by the oracle to the adversary's $q$ queries.
For $\omega\in\Omega$, let $\pf(\omega)$ and $\pF(\omega)$ be the probabilities that $\omega$ is the actual sequence of replies that the oracle gives to the adversary's $q$ queries, in the case the oracle chose a random permutation or a random function, respectively.
For every $j\geq 2$ and $\omega\in\Omega$, let
$$
\col_j(\omega):=\#\{1\leq i_1<i_2<\ldots<i_j\leq q\mid\omega_{i_1}=\omega_{i_2}=\ldots=\omega_{i_j}\}.
$$

 \begin{lemma}\label{collisions}
For every $j\geq 2$,
\begin{align*}\E_{{f}}\col_j&=\binom{q}{j}\frac{1}{2^{(j-1)(n-m)}},\\
\Var_{{f}}\col_j&\leq\binom{j}{2}\binom{q}{j}\frac{1}{2^{(j-1)(n-m)}}\left(1+\frac{q}{2^{n-m}}\right)^{j-2}.
\end{align*}
\end{lemma}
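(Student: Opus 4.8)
The first thing I would establish is that, under $\pF$, the reply sequence is distributed as $q$ independent uniform samples from $\{0,1\}^{n-m}$. Writing $N:=2^{n-m}$ for brevity, the point is that a uniformly random $f$ takes independent uniform values on distinct inputs, so even though the adversary queries adaptively (and we may assume it never repeats a query), the successive $(n-m)$-bit replies $\omega_1,\dots,\omega_q$ are i.i.d.\ uniform on $\{0,1\}^{n-m}$: conditioning on a prefix $\omega_1,\dots,\omega_{i-1}$ fixes the $i$-th query $w_i$ but leaves $f(w_i)$ uniform and independent of the earlier values, hence its truncation uniform on $\{0,1\}^{n-m}$. Thus $\col_j$ is simply a collision statistic of $q$ i.i.d.\ uniform balls in $N$ bins, and the whole lemma reduces to a first- and second-moment computation in this classical model.

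For the expectation I would write $\col_j=\sum_{S}X_S$, where $S$ ranges over the $\binom{q}{j}$ index-sets of size $j$ and $X_S$ is the indicator of the event that $\omega_i$ is constant over $i\in S$. Since $\Pr[X_S=1]=N\cdot N^{-j}=N^{-(j-1)}$, linearity of expectation gives $\E_f\col_j=\binom{q}{j}N^{-(j-1)}$, the claimed formula.

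For the variance I would use $\Var_f\col_j=\sum_{S,T}\operatorname{Cov}(X_S,X_T)$ and organize the sum by $k:=|S\cap T|$. The key structural observation is that $X_SX_T=1$ forces all of $S\cup T$ to be constant whenever $S\cap T\neq\emptyset$, so $\E[X_SX_T]=N^{-(2j-k-1)}$ for $k\ge1$, while for disjoint $S,T$ independence gives the product; in both the $k=0$ and $k=1$ cases this equals $\E[X_S]\E[X_T]=N^{-(2j-2)}$, whence $\operatorname{Cov}(X_S,X_T)=0$ unless $k\ge2$. For $k\ge2$ I bound $\operatorname{Cov}(X_S,X_T)\le N^{-(2j-k-1)}$ and count pairs: there are $\binom{q}{j}\binom{j}{k}\binom{q-j}{j-k}$ ordered pairs with $|S\cap T|=k$ (choose $S$, then the intersection inside $S$, then $T\setminus S$ among the $q-j$ remaining indices). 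Collecting terms and substituting $\ell=j-k$ turns the bound into $\binom{q}{j}N^{-(j-1)}\sum_{\ell=0}^{j-2}\binom{j}{\ell}\binom{q-j}{\ell}N^{-\ell}$.

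The remaining work—and the step most prone to error—is to compress this sum into the clean closed form. I would use the two elementary inequalities $\binom{j}{\ell}\le\binom{j}{2}\binom{j-2}{\ell}$, valid for $0\le\ell\le j-2$ because $(j-\ell)(j-\ell-1)\ge2$, together with $\binom{q-j}{\ell}\le q^{\ell}$. Applying these term by term and recognizing the binomial theorem yields $\sum_{\ell=0}^{j-2}\binom{j}{\ell}\binom{q-j}{\ell}N^{-\ell}\le\binom{j}{2}\sum_{\ell=0}^{j-2}\binom{j-2}{\ell}(q/N)^{\ell}\le\binom{j}{2}\left(1+\frac{q}{N}\right)^{j-2}$, which gives exactly the stated bound after restoring $N=2^{n-m}$. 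The main obstacle is purely bookkeeping: tracking the exponents of $N$ correctly in the covariance and choosing the inequality $\binom{j}{\ell}\le\binom{j}{2}\binom{j-2}{\ell}$ that extracts the prefactor $\binom{j}{2}$ while leaving behind a sum the binomial theorem folds into $\left(1+q/2^{n-m}\right)^{j-2}$.
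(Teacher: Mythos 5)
Your proof is correct and follows essentially the same route as the paper's: the same indicator decomposition $\col_j=\sum_J X_J$, linearity for the expectation, a covariance sum organized by intersection size with the count $\binom{q}{j}\binom{j}{k}\binom{q-j}{j-k}$, and the same key bounds $\binom{j}{\ell}\le\binom{j}{2}\binom{j-2}{\ell}$ and $\binom{q-j}{\ell}\le q^{\ell}$ folded up by the binomial theorem. The only (harmless) differences are cosmetic: you discard the subtracted term $\E X_S\,\E X_T$ for $k\ge 2$ and note explicitly that the $k\le 1$ covariances vanish, whereas the paper keeps the exact difference and lets the $k=1$ term cancel; you also spell out the i.i.d.-uniform structure of the replies under adaptive queries, which the paper treats as clear.
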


\begin{proof}
Note that for every $j\geq 2$,
$$\col_j=\sum_{J\in\binom{[q]}{j}}X_J,$$ 
where $\binom{[q]}{j}:=\{\{i_1,i_2,\ldots,i_j\}\mid1\leq i_1<i_2<\ldots<i_j\leq q\}$ and $X_{\{ i_1,i_2,\ldots,i_j\}}$ is the indicator function of the event $\{\omega_{i_1}=\omega_{i_2}=\ldots=\omega_{i_j}\}$. 
Since clearly $\E_{{f}}X_J=(1/2^{n-m})^{|J|-1} $ for every $J$, we immediately get that
$$\E_{{f}}\col_j=\E_{{f}}\sum_{J\in\binom{[q]}{j}}X_J=\sum_{J\in\binom{[q]}{j}}\E_{{f}} X_J=\binom{q}{j}\bigg(\frac{1}{2^{n-m}}\bigg)^{j-1}.
$$
Since $X_{J_1}$ and $X_{J_2}$ are clearly independent whenever $J_1$ and $J_2$ are disjoint, 
\begin{align*}
\Var_{{f}}\col_j=&\Var_{{f}}\sum_{J\in\binom{[q]}{j}}X_J=\sum_{J_1\in\binom{[q]}{j}}\sum_{\substack{{J_2}\in\binom{[q]}{j}\\J_1\bigcap J_2\neq\emptyset}}\E_{{f}} X_{J_1} X_{J_2}-\E_{{f}} X_{J_1} \cdot \E_{{f}} X_{J_2}=\\
=&\sum_{J_1\in\binom{[q]}{j}}\sum_{\substack{{J_2}\in\binom{[q]}{j}\\J_1\bigcap{J_2}\neq\emptyset}}\bigg(\frac{1}{2^{n-m}}\bigg)^{|J_1\bigcup{J_2}|-1}-\bigg(\frac{1}{2^{n-m}}\bigg)^{|J_1|-1}\bigg(\frac{1}{2^{n-m}}\bigg)^{|{J_2}|-1}=\\
=&\binom{q}{j}\frac{1}{2^{(j-1)(n-m)}}\sum_{i=0}^{j-2}\binom{j}{i}\binom{q-j}{i}\left(\frac{1}{2^{i(n-m)}}-\frac{1}{2^{(j-1)(n-m)}}\right)\leq\\
\leq&\binom{j}{2}\binom{q}{j}\frac{1}{2^{(j-1)(n-m)}}\sum_{i=0}^{j-2} \binom{j-2}{i}q^i\frac{1}{2^{i(n-m)}}=\binom{j}{2}\binom{q}{j}\frac{1}{2^{(j-1)(n-m)}}\left(1+\frac{q}{2^{n-m}}\right)^{j-2}.\qedhere
\end{align*}
\end{proof}

The advantage of an algorithm is defined as $\bigl| \pf(E) -\pF(E)  \bigr|$, where $E$ is the event that the algorithm outputs (say) $1$.
The maximum of the advantage of an algorithm, over all possible algorithms, is called the adversary's advantage, and is denoted here by $Adv$.
Clearly,
\begin{equation}\label{eq:Adv}
Adv\leq\max_{E\subseteq \Omega}\left| \pf(E)-\pF(E)\right|=\frac{1}{2}\sum_{\omega\in\Omega}\left| \pf(\omega)-\pF(\omega)\right|,
\end{equation}
with equality, if no computational restricitions are imposed on the adversary. 
We use the following estimate for $Adv$, which is slightly better than a similar bound used in \cite{Hall}.

\begin{lemma}
\label{advantage}
For every $S\subseteq\Omega$,
$$Adv\leq \max_{\omega\in S}\left\lvert\frac{\pf(\omega)}{\pF(\omega)}-1\right\rvert+\pF(\bar{S}).$$
\end{lemma}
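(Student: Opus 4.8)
The plan is to start from the bound \eqref{eq:Adv}, which reduces the estimation of $Adv$ to controlling $\left|\pf(E)-\pF(E)\right|$ for an arbitrary event $E\subseteq\Omega$. I would split $\Omega$ into the ``good'' set $S$, on which the likelihood ratio $\pf(\omega)/\pF(\omega)$ is close to $1$, and the ``bad'' complement $\bar S$, which is paid for by its mass $\pF(\bar S)$ under the random function. Writing $\delta:=\max_{\omega\in S}\left\lvert\frac{\pf(\omega)}{\pF(\omega)}-1\right\rvert$, the whole point of this quantity is that it gives the pointwise control $\left\lvert\pf(\omega)-\pF(\omega)\right\rvert\leq\delta\,\pF(\omega)$ for every $\omega\in S$ (where $\pF(\omega)>0$, as is the case for the reply sequences under consideration).

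The main step is to bound $\pF(E)-\pf(E)$ in \emph{one} fixed direction, for an arbitrary $E\subseteq\Omega$. I would decompose
$$\pF(E)-\pf(E)=\sum_{\omega\in E\cap S}\bigl(\pF(\omega)-\pf(\omega)\bigr)+\sum_{\omega\in E\cap\bar S}\bigl(\pF(\omega)-\pf(\omega)\bigr).$$
On $E\cap S$ the pointwise estimate yields $\sum_{\omega\in E\cap S}\bigl(\pF(\omega)-\pf(\omega)\bigr)\leq\delta\sum_{\omega\in E\cap S}\pF(\omega)\leq\delta$, since these probabilities sum to at most $1$. On $E\cap\bar S$ I would simply discard the non-positive contribution $-\pf(\omega)$ and bound $\sum_{\omega\in E\cap\bar S}\bigl(\pF(\omega)-\pf(\omega)\bigr)\leq\sum_{\omega\in E\cap\bar S}\pF(\omega)\leq\pF(\bar S)$. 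Together these give $\pF(E)-\pf(E)\leq\delta+\pF(\bar S)$.

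Finally, I would recover the opposite direction by applying the same inequality to the complement $\bar E$: since $\pF(\bar E)-\pf(\bar E)=\pf(E)-\pF(E)$, this yields $\pf(E)-\pF(E)\leq\delta+\pF(\bar S)$ as well. Hence $\left\lvert\pf(E)-\pF(E)\right\rvert\leq\delta+\pF(\bar S)$ for every $E\subseteq\Omega$, and taking the maximum over $E$ in \eqref{eq:Adv} gives the claimed bound on $Adv$.

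The step to watch is the asymmetry built into the statement: the error term is $\pF(\bar S)$, the random-function mass of the bad set, rather than $\pf(\bar S)$. This is exactly why I would bound the single direction $\pF(E)-\pf(E)$ first (so that dropping the $-\pf(\omega)$ terms on the bad part leaves a bound purely in terms of $\pF$), and only afterwards pass to the other direction by complementation, instead of estimating $\left\lvert\pf(E)-\pF(E)\right\rvert$ symmetrically from the outset. No genuinely hard inequality is involved; the entire content is this bookkeeping together with the pointwise likelihood-ratio bound on $S$.
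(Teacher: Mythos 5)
Your proof is correct, and it takes a genuinely different (though equally elementary) route from the paper's. The paper invokes the total-variation identity in \eqref{eq:Adv}, bounding $Adv\leq\frac{1}{2}\sum_{\omega\in\Omega}\left|\pf(\omega)-\pF(\omega)\right|$, splitting this sum over $S$ and $\bar{S}$, and disposing of the bad set via the chain $\sum_{\omega\in\bar{S}}\left|\pf(\omega)-\pF(\omega)\right|\leq\pf(\bar{S})+\pF(\bar{S})=\pF(S)-\pf(S)+2\pF(\bar{S})\leq\sum_{\omega\in S}\left|\pf(\omega)-\pF(\omega)\right|+2\pF(\bar{S})$; the prefactor $\frac{1}{2}$ absorbs the doubling, and pulling $\max_{\omega\in S}\left|\pf(\omega)/\pF(\omega)-1\right|$ out of the sum over $S$ finishes. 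You never use the equality in \eqref{eq:Adv}: you keep only the trivial inequality $Adv\leq\max_{E\subseteq\Omega}\left|\pf(E)-\pF(E)\right|$, prove the one-sided estimate $\pF(E)-\pf(E)\leq\delta+\pF(\bar{S})$ for every event $E$ (with $\delta:=\max_{\omega\in S}\left|\pf(\omega)/\pF(\omega)-1\right|$, dropping the nonnegative $\pf$-terms on $E\cap\bar{S}$), and recover the opposite sign by replacing $E$ with $\bar{E}$. Both arguments hinge on the same underlying point --- converting $\pf$-mass of the bad set into $\pF$-mass --- but the paper achieves it by the complementation identity applied to $S$ inside the probability space, whereas you achieve it by complementing the test event $E$. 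Your route is somewhat more self-contained (it needs only the easy half of \eqref{eq:Adv}) and makes the asymmetry of the statement transparent; the paper's route keeps the whole computation at the level of the $\ell_1$ sum, where the factor $2\pF(\bar{S})$ is visibly halved by the total-variation $\frac{1}{2}$. The resulting bounds are identical.
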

\begin{proof} 
Note that 
\begin{multline*}
\sum_{\omega\in\bar{S}}\left| \pf(\omega)-\pF(\omega)\right|\leq\sum_{\omega\in\bar{S}}\left(\pf(\omega)+\pF(\omega)\right)=\\
=\pf(\bar{S})+\pF(\bar{S})=
\pF(S)-\pf(S)+2\pF(\bar{S})=\\
=\sum_{\omega\in S}\left(\pF(\omega)-\pf(\omega)\right)+2\pF(\bar{S})\leq\sum_{\omega\in S}\left|\pf(\omega)-\pF(\omega)\right|+2\pF(\bar{S}).
\end{multline*}
Therefore, using \eqref{eq:Adv}, 
\begin{multline*}
Adv\leq\frac{1}{2}\sum_{\omega\in\Omega}\left| \pf(\omega)-\pF(\omega)\right|=\frac{1}{2}\sum_{\omega\in S}\left| \pf(\omega)-\pF(\omega)\right|+\frac{1}{2}\sum_{\omega\in\bar{S}}\left| \pf(\omega)-\pF(\omega)\right|\leq\\
\leq\sum_{\omega\in S}\left|\pf(\omega)-\pF(\omega)\right|+\pF(\bar{S})=\sum_{\omega\in S}\pF(\omega)\left| \frac{\pf(\omega)}{\pF(\omega)}-1\right|+\pF(\bar{S})\leq\\
\leq\left(\sum_{\omega\in S}\pF(\omega)\right)\max_{\omega\in S}\left\lvert\frac{\pf(\omega)}{\pF(\omega)}-1\right\rvert+\pF(\bar{S})\leq\max_{\omega\in S}\left\lvert\frac{\pf(\omega)}{\pF(\omega)}-1\right\rvert+\pF(\bar{S}).\qedhere\end{multline*}
\end{proof}

In the proofs of Theorem \ref{friendly_theorem} and Theorem \ref{friendly_theorem2} we apply Lemma \ref{advantage} to the set
\begin{equation*}S=\left\{\omega\in\Omega\mid \forall 2\leq j\leq t:\left\lvert \col_j(\omega)-\binom{q}{j}\frac{1}{2^{(j-1)(n-m)}}\right\rvert\leq\alpha_j,\,\col_{t+1}(\omega)\leq\beta\right\},\end{equation*}
where $t\geq 2$ is an integer and $\alpha_1,\alpha_2,\ldots,\alpha_{t-1},\beta$ are positive real numbers, which are chosen apropriately. 
A particular case of this $S$, with  $t=2$, $\alpha_1=c\,q/2^{\frac{n-m+1}{2}}$, $\beta=0$, was used in \cite{Hall}. In this work, we get a refined asymptotic approximation for $Adv$ by using the above general choice of $S$. In the proof of Theorem \ref{friendly_theorem}, we also use $t=2$, but different $\alpha_1$ (which we simply denote $\alpha$) and different $\beta$.

\section{Proof of Theorem \ref{friendly_theorem}}
\label{sec:main_theorem}

The flow of the proof is as follows. 
As mentioned in Section \ref{sec3}, we let
$$S=\left\{\omega\in\Omega:\left\lvert \col_2(\omega)-\binom{q}{2}\frac{1}{2^{n-m}}\right\rvert\leq\alpha\; ,\; \col_3(\omega)\leq\beta\right\},$$
where $\alpha,\beta$ are positive constants to be specifired later.
In Subection \ref{sec:technical} we prove our main technical result, Proposition \ref{proposition}, which provides an upper bound for $\lvert \pf/\pF-1 \rvert$ in $S$.
In Subsection \ref{sec:combine} we first derive, in Lemma \ref{chebyshev}, an upper bound for $\pF(\bar{S})$. Then we combine Lemma \ref{advantage}, Proposition \ref{proposition}, Lemma \ref{chebyshev}, and choose optimal parameters $\alpha, \beta$ to obtain Theorem \ref{friendly_theorem}.

\subsection{Bounding $\lvert \pf/\pF-1 \rvert$ in $S$}

\label{sec:technical}

In this subsection, we prove the following proposition.

\begin{proposition}\label{proposition}
Suppose that $q\leq 2^{n-1}$,
\begin{align}
\label{p1a2}&\frac{\alpha}{2^m}+\frac{2}{3}\cdot\frac{q^3}{2^{2n}}\leq\frac{1}{2},\\
\label{p1a3}&\binom{q}{2}\frac{1}{2^{n-m}}+\alpha\leq\binom{2^{m-1}}{2},\\
\label{p1a4}&\beta\geq 2\binom{q}{3}\frac{1}{2^{2(n-m)}}.
\end{align} 
Then for every $\omega\in S$,
\begin{equation*}\left\lvert\frac{\pf(\omega)}{\pF(\omega)}-1\right\rvert\leq  2\frac{\alpha}{2^m}+2\binom{q}{2}\frac{1}{2^{n+m}}+4\frac{\beta}{2^{2m}}.\end{equation*}
\end{proposition}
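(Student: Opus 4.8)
The plan is to first pin down $\pf(\omega)$ and $\pF(\omega)$ exactly, and then linearize the ratio. Writing $n_v$ for the number of indices $i$ with $\omega_i=v$, we have $\sum_v n_v=q$ and, crucially, $\sum_v\binom{n_v}{j}=\col_j(\omega)$ for every $j$. Grouping the $2^n$ outputs into the $2^{n-m}$ buckets of size $2^m$ determined by their first $n-m$ bits, a random function hits bucket $v$ with probability $2^m/2^n$ independently at each query, giving $\pF(\omega)=2^{-q(n-m)}$; a random permutation reveals $q$ distinct outputs uniformly (adaptivity only selects which distinct queries are asked, not the distribution of the values), giving $\pf(\omega)=\big(\prod_v\prod_{k=0}^{n_v-1}(2^m-k)\big)\big/\prod_{k=0}^{q-1}(2^n-k)$. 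Hence
$$\frac{\pf(\omega)}{\pF(\omega)}=\frac{\prod_v\prod_{k=0}^{n_v-1}\left(1-\frac{k}{2^m}\right)}{\prod_{k=0}^{q-1}\left(1-\frac{k}{2^n}\right)},$$
and I would set $L:=\log\bigl(\pf(\omega)/\pF(\omega)\bigr)$ and bound $L$ two-sidedly, converting to $|\pf/\pF-1|=|e^L-1|$ at the end.

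For the estimate of $L$, condition \eqref{p1a3} forces $\binom{n_v}{2}\le\col_2(\omega)\le\binom{2^{m-1}}{2}$, hence $n_v\le 2^{m-1}$, so every argument $k/2^m$ in the numerator lies in $[0,1/2)$; likewise $q\le 2^{n-1}$ keeps every $k/2^n$ in $[0,1/2]$. Writing $g(x):=-\log(1-x)-x$, one checks $0\le g(x)\le x^2$ on $[0,1/2]$, so with $\sum_{k=0}^{n_v-1}k=\binom{n_v}{2}$ and the identity $\sum_{k=0}^{n_v-1}k^2=2\binom{n_v}{3}+\binom{n_v}{2}$ (summed over $v$) I obtain $L=-\frac{\delta}{2^m}-G_A+G_B$, where $\delta:=\col_2(\omega)-\binom{q}{2}2^{-(n-m)}$ (so the leading terms $\col_2/2^m$ and $\binom{q}{2}/2^n$ cancel), $0\le G_A=\sum_v\sum_k g(k/2^m)\le\frac{2\col_3(\omega)+\col_2(\omega)}{2^{2m}}$ and $0\le G_B=\sum_k g(k/2^n)\le\frac{2\binom{q}{3}+\binom{q}{2}}{2^{2n}}$. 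Feeding in the defining inequalities of $S$, namely $|\delta|\le\alpha$, $\col_3(\omega)\le\beta$ and $\col_2(\omega)\le\binom{q}{2}2^{-(n-m)}+\alpha$, gives
$$-\frac{\alpha}{2^m}-\frac{\binom{q}{2}}{2^{n+m}}-\frac{2\beta}{2^{2m}}-\frac{\alpha}{2^{2m}}\le L\le\frac{\alpha}{2^m}+\frac{2\binom{q}{3}+\binom{q}{2}}{2^{2n}}.$$

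It remains to convert these into the claimed bound on $|e^L-1|$. On the lower side ($L<0$) the elementary inequality $1-e^L\le-L$ together with $\alpha/2^{2m}\le\alpha/2^m$ yields $1-\pf/\pF\le\frac{2\alpha}{2^m}+\frac{\binom{q}{2}}{2^{n+m}}+\frac{2\beta}{2^{2m}}$, which is below the target. On the upper side ($L\ge0$), condition \eqref{p1a2} guarantees $L\le 1/2$, so $e^L-1\le 2L\le\frac{2\alpha}{2^m}+2G_B$, and the term $4\beta/2^{2m}\ge0$ in the target is free. The main obstacle is precisely that the surviving second-order term $G_B$ is expressed through $q^3/2^{2n}$ rather than through the target quantity $\binom{q}{2}/2^{n+m}$, and reconciling these is exactly where the hypothesis $m\le n/3$ enters: \eqref{p1a2} forces $q\le(3/4)^{1/3}2^{2n/3}$, and $m\le n/3$ gives $2^{n-m}\ge 2^{2n/3}$, hence $q\le\tfrac32\,2^{n-m}$, after which a short computation shows $2G_B\le 2\binom{q}{2}/2^{n+m}$. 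I expect the two genuinely delicate points to be the rigorous justification that the adaptive transcript probabilities are given by the closed forms above, and this final inequality $2G_B\le 2\binom{q}{2}/2^{n+m}$, whose validity rests on the interplay between \eqref{p1a2} and $m\le n/3$; condition \eqref{p1a4} plays its role in the companion bound on $\pF(\bar S)$ rather than here.
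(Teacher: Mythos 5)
Up to its last step, your argument is essentially the paper's own proof: the same exact formulas for $\pf(\omega)$ and $\pF(\omega)$, the same logarithmic decomposition $L=-\delta/2^m-G_A+G_B$ organized by the collision statistics, and the same conversion inequalities $1-e^{L}\le -L$ and $e^{L}-1\le 2L$ (your remainder bound $g(x)\le x^{2}$ on $[0,1/2]$ is a slightly sharpened form of the paper's $-\ln(1-x)-x\le 2x^{2}$, and your lower side is complete and correct). The genuine gap is the final inequality $2G_B\le 2\binom{q}{2}/2^{n+m}$: you derive it from $m\le n/3$, but $m\le n/3$ is \emph{not} a hypothesis of Proposition \ref{proposition} --- it is a hypothesis of Theorem \ref{friendly_theorem} only, and the proposition is stated (and must be proved) for arbitrary $m<n$ under \eqref{p1a2}, \eqref{p1a3}, \eqref{p1a4} alone. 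Under those hypotheses your inequality is false in general: take, say, $m=n-2$ and $q=2^{2n/3-1}$, which is permitted by \eqref{p1a2} with $\alpha$ small and by \eqref{p1a3} for large $n$; then $2G_B$ is of order $q^{3}/2^{2n}$, which exceeds $q^{2}/2^{n+m}$ by a factor of order $q/2^{n-m}\gg 1$.

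The missing idea is precisely the hypothesis you set aside. Condition \eqref{p1a4} is what absorbs the cubic part of $G_B$, and the term $4\beta/2^{2m}$ that you declared ``free'' is its receptacle; this is the only reason \eqref{p1a4} appears among the proposition's hypotheses at all. Keeping your bound $G_B\le\bigl(2\binom{q}{3}+\binom{q}{2}\bigr)/2^{2n}$, split
\begin{equation*}
2G_B\le\frac{2\binom{q}{2}}{2^{2n}}+\frac{4\binom{q}{3}}{2^{2n}}
=\frac{2}{2^{n-m}}\cdot\binom{q}{2}\frac{1}{2^{n+m}}+\frac{2}{2^{2m}}\cdot 2\binom{q}{3}\frac{1}{2^{2(n-m)}}
\le 2\binom{q}{2}\frac{1}{2^{n+m}}+\frac{2\beta}{2^{2m}},
\end{equation*}
using only $2^{n-m}\ge 1$ and \eqref{p1a4}. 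This closes the upper side for every $m<n$, with no appeal to $m\le n/3$, and it is exactly what the paper does. Note also that your closing remark inverts the roles of the hypotheses: the bound on $\pF(\bar{S})$ (Lemma \ref{chebyshev}) is a Markov/Chebyshev argument valid for \emph{every} $\beta>0$ and never uses \eqref{p1a4}; the place where \eqref{p1a4} is needed is exactly the step you left open.
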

In the proof of Proposition \ref{proposition}, we use the following three lemmas.

\begin{lemma}
For every $ x\leq 1$,
\begin{equation}
x\leq -\ln(1-x) \label{eq:ln1},
\end{equation}
for every $0\leq x\leq 1/2$,
\begin{equation}\label{eq:ln3}
-\ln(1-x)-x\leq 2x^2,
\end{equation}
and for every $1\leq x\leq 2$,
\begin{equation}\label{eq:ln2}
x-1\leq 2\ln x.
\end{equation}
\end{lemma}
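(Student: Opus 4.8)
The three inequalities are all elementary one-variable estimates, and I would handle each by the same recipe: exhibit an auxiliary function that vanishes at a convenient endpoint and show it is nonnegative on the relevant interval by reading off the sign of its derivative.

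For the first inequality, I set $g(x) := -\ln(1-x) - x$, defined for $x < 1$. Then $g(0) = 0$ and $g'(x) = \frac{1}{1-x} - 1 = \frac{x}{1-x}$. On $[0,1)$ the numerator and denominator are nonnegative, so $g' \ge 0$, while on $(-\infty, 0)$ we have $g' \le 0$; hence $g$ attains its minimum at $x = 0$, giving $g(x) \ge 0$, i.e. $x \le -\ln(1-x)$, for every $x < 1$. The remaining boundary case $x = 1$ is trivial, since then the right-hand side is $+\infty$. (Equivalently, this is just the familiar bound $\ln(1+y) \le y$ applied with $y = -x$.)

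For the second inequality, restricted to $0 \le x \le 1/2$, I consider $h(x) := 2x^2 + x + \ln(1-x)$, so that $h(0) = 0$ and a short computation gives $h'(x) = 4x + 1 - \frac{1}{1-x} = \frac{x(3-4x)}{1-x}$. Since $x \ge 0$, $3 - 4x \ge 1 > 0$, and $1 - x > 0$ throughout $[0, 1/2]$, we get $h' \ge 0$, so $h$ is nondecreasing and thus $h(x) \ge h(0) = 0$, which is precisely $-\ln(1-x) - x \le 2x^2$. (Alternatively, one can expand $-\ln(1-x) - x = \sum_{k \ge 2} x^k / k$, bound each coefficient $1/k$ by $1/2$, and sum the resulting geometric tail using $\frac{1}{1-x} \le 2$; this in fact yields the slightly stronger bound $x^2$.) For the third inequality, on $1 \le x \le 2$ I take $\phi(x) := 2\ln x - x + 1$, with $\phi(1) = 0$ and $\phi'(x) = \frac{2}{x} - 1 = \frac{2-x}{x} \ge 0$ on $[1,2]$; hence $\phi$ is nondecreasing there and $\phi(x) \ge \phi(1) = 0$, i.e. $x - 1 \le 2\ln x$.

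There is no genuine obstacle in any of the three parts. The only points demanding a little care are keeping track of the correct domain of definition so that $\ln$ is always evaluated at a positive argument, and, in the first inequality, treating the region $x < 0$ and the endpoint $x = 1$ separately from the main interval $[0,1)$.
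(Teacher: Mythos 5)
Your proof is correct, but it takes a genuinely different route from the paper's. The paper dispatches \eqref{eq:ln1} and \eqref{eq:ln3} simultaneously from a single Taylor expansion with Lagrange remainder, $-\ln(1-x)=x+\frac{x^2}{2(1-\xi)^2}$ for some $\xi$ between $0$ and $x$: nonnegativity of the remainder gives \eqref{eq:ln1} for every $x\leq 1$, and the bound $(1-\xi)^2\geq 1/4$ for $0\leq \xi\leq x\leq 1/2$ gives \eqref{eq:ln3}. It then obtains \eqref{eq:ln2} not by a fresh argument but by a substitution reducing it to \eqref{eq:ln1}: for $1\leq x\leq 2$, $x-1\leq 2\cdot\frac{x-1}{x}\leq -2\ln\left(1-\frac{x-1}{x}\right)=2\ln x$. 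You instead treat each of the three inequalities independently, exhibiting an auxiliary function that vanishes at an endpoint and reading off the sign of its derivative; all three computations check out, including the sign analysis for $x<0$ and the endpoint $x=1$ in the first part (which the Lagrange form also covers, since the remainder is nonnegative for any $\xi$ between $x$ and $0$). Your approach is more uniform and self-contained, using nothing beyond first derivatives; the paper's is more economical, extracting two inequalities from one expansion and getting the third essentially for free from the first, and its remainder-form viewpoint foreshadows the order-$t$ Taylor estimate used later in Lemma~\ref{lemma ikt2}. Your parenthetical power-series argument for \eqref{eq:ln3}, which yields the sharper constant $1$ in place of $2$, is a nice observation, though the paper makes no use of the improvement.
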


\begin{proof}
By Taylor expansion, for every $x\leq 1$,
$$-\ln(1-x)=x+\frac{x^2}{2(1-\xi)^2}$$
for some $\xi$ between $0$ and $x$, and \eqref{eq:ln1}, \eqref{eq:ln3} follow. To deduce \eqref{eq:ln2}, note that if $1\leq x\leq 2$, then by \eqref{eq:ln1},
\begin{equation*}
x-1\leq 2\frac{x-1}{x}\leq-2\ln\left(1-\frac{x-1}{x}\right)=2\ln x.
\qedhere\end{equation*}
\end{proof}

\begin{lemma}\label{lemma skt}
Let $s,k$ be positive integers such that $s\leq 2^{k-1}$. Then
$$0\leq -\ln\prod_{i=0}^{s-1}\left(1-\frac{i}{2^k}\right)-\binom{s}{2}\frac{1}{2^k}\leq 4\binom{s}{3}\frac{1}{2^{2k}}+2\binom{s}{2}\frac{1}{2^{2k}}.$$
\end{lemma}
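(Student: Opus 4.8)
The plan is to take logarithms and turn the product into a sum, then bound each summand from below and above using the elementary logarithmic inequalities \eqref{eq:ln1} and \eqref{eq:ln3}. Writing $-\ln\prod_{i=0}^{s-1}(1-i/2^k)=\sum_{i=1}^{s-1}\bigl(-\ln(1-i/2^k)\bigr)$ (the $i=0$ term vanishes), the lower bound is immediate: each term satisfies $-\ln(1-i/2^k)\ge i/2^k$ by \eqref{eq:ln1}, so summing and using $\sum_{i=1}^{s-1} i=\binom{s}{2}$ gives $-\ln\prod_{i=0}^{s-1}(1-i/2^k)\ge\binom{s}{2}/2^k$, which is exactly the left inequality of the claim.

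For the upper bound, I would first check that the hypothesis $s\le 2^{k-1}$ puts every argument in the admissible range of \eqref{eq:ln3}: for $1\le i\le s-1$ we have $i/2^k\le(2^{k-1}-1)/2^k<1/2$, so \eqref{eq:ln3} applies termwise and yields $-\ln(1-i/2^k)-i/2^k\le 2(i/2^k)^2$. Summing over $i$ then gives
$$-\ln\prod_{i=0}^{s-1}\Bigl(1-\frac{i}{2^k}\Bigr)-\binom{s}{2}\frac{1}{2^k}\le\frac{2}{2^{2k}}\sum_{i=1}^{s-1} i^2.$$

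The final step is the combinatorial identity $\sum_{i=1}^{s-1} i^2=2\binom{s}{3}+\binom{s}{2}$, which I would verify by expanding both sides (each equals $s(s-1)(2s-1)/6$). Substituting it turns the estimate into an exact match with the claimed right-hand side $4\binom{s}{3}/2^{2k}+2\binom{s}{2}/2^{2k}$. I do not expect a genuine obstacle: the only points requiring care are confirming the range condition $i/2^k\le 1/2$ so that \eqref{eq:ln3} is legitimately invoked, and recognizing that the power-sum identity makes the bound tight rather than merely an inequality.
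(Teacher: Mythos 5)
Your proof is correct and takes essentially the same approach as the paper's: both arguments apply \eqref{eq:ln1} and \eqref{eq:ln3} termwise to $-\ln\left(1-i/2^k\right)$ (the hypothesis $s\leq 2^{k-1}$ guaranteeing $i/2^k\leq 1/2$) and then sum over $0\leq i\leq s-1$. The only cosmetic difference is bookkeeping: you sum $i^2$ first and invoke $\sum_{i=1}^{s-1}i^2=2\binom{s}{3}+\binom{s}{2}$, whereas the paper rewrites each term $2i^2$ as $4\binom{i}{2}+2i$ and then sums via $\sum_{i=0}^{s-1}\binom{i}{2}=\binom{s}{3}$ and $\sum_{i=0}^{s-1}i=\binom{s}{2}$.
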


\begin{proof}
For every integer $0\leq i\leq 2^{k-1}$, by \eqref{eq:ln1} and \eqref{eq:ln3},
\begin{equation}\label{ik}0\leq-\ln\left(1-\frac{i}{2^k}\right)-\frac{i}{2^k}\leq 2\left(\frac{i}{2^k}\right)^2=4\binom{i}{2}\frac{1}{2^{2k}}+2\frac{i}{2^{2k}}.\end{equation}
The Lemma follows by summing up the inequalities \eqref{ik} for $0\leq i\leq s-1$, and using the identities $\sum_{i=0}^{s-1}i=\binom{s}{2}$, $ \sum_{i=0}^{s-1}\binom{i}{2}=\binom{s}{3}$.
\end{proof}

\begin{lemma}\label{ln}
Suppose that $q\leq 2^{n-1}$ and let $\omega\in\Omega$ such that
$\col_2(\omega)\leq\binom{2^{m-1}}{2}$. Then,
\begin{equation*}0\leq-\ln\frac{\pf(\omega)}{\pF(\omega)}-\ln\prod_{i=0}^{q-1}\left(1-\frac{i}{2^n}\right)-\frac{\col_2(\omega)}{2^m}\leq 4\frac{\col_3(\omega)}{2^{2m}}+2\frac{\col_2(\omega)}{2^{2m}}.\end{equation*}
\end{lemma}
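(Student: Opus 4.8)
The plan is to reduce the statement to a per-value application of Lemma~\ref{lemma skt}. First I would write down the two reply probabilities explicitly. For each value $a\in\{0,1\}^{n-m}$ let $c_a:=\#\{1\le i\le q\mid\omega_i=a\}$ denote its multiplicity in $\omega$, so that $\sum_a c_a=q$, $\col_2(\omega)=\sum_a\binom{c_a}{2}$ and $\col_3(\omega)=\sum_a\binom{c_a}{3}$. In the random function case the $q$ (distinct) queries receive independent uniform replies in $\{0,1\}^{n-m}$, so $\pF(\omega)=2^{-(n-m)q}$. In the random permutation case the full $n$-bit images form a uniformly random tuple of distinct strings; grouping the $q$ positions according to their truncation value $a$ and counting injective choices of the $m$-bit suffixes within each group, the number of favorable assignments factors as $\prod_a\prod_{i=0}^{c_a-1}(2^m-i)$, whence
$$\pf(\omega)=\frac{\prod_a\prod_{i=0}^{c_a-1}(2^m-i)}{\prod_{i=0}^{q-1}(2^n-i)}.$$

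Dividing the two expressions, factoring the powers of $2$ out of each product and using $\sum_a c_a=q$, all powers of $2$ cancel and I would obtain the clean ratio
$$\frac{\pf(\omega)}{\pF(\omega)}=\frac{\prod_a\prod_{i=0}^{c_a-1}\left(1-\frac{i}{2^m}\right)}{\prod_{i=0}^{q-1}\left(1-\frac{i}{2^n}\right)}.$$
Taking $-\ln$ of both sides and then subtracting $\ln\prod_{i=0}^{q-1}(1-i/2^n)$ cancels the denominator, and writing $\frac{\col_2(\omega)}{2^m}=\sum_a\binom{c_a}{2}\frac{1}{2^m}$ turns the whole quantity to be bounded into a single sum over values:
$$-\ln\frac{\pf(\omega)}{\pF(\omega)}-\ln\prod_{i=0}^{q-1}\left(1-\frac{i}{2^n}\right)-\frac{\col_2(\omega)}{2^m}=\sum_a\left(-\ln\prod_{i=0}^{c_a-1}\left(1-\frac{i}{2^m}\right)-\binom{c_a}{2}\frac{1}{2^m}\right).$$

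It then remains to bound each summand. To invoke Lemma~\ref{lemma skt} with $k=m$ and $s=c_a$ I first need $c_a\le 2^{m-1}$ for every $a$; this follows from the hypothesis, since $\binom{c_a}{2}\le\sum_b\binom{c_b}{2}=\col_2(\omega)\le\binom{2^{m-1}}{2}$ and $x\mapsto\binom{x}{2}$ is increasing. Lemma~\ref{lemma skt} then gives, for each $a$, both the nonnegativity of the summand and the upper bound $4\binom{c_a}{3}2^{-2m}+2\binom{c_a}{2}2^{-2m}$. Summing over $a$ and substituting $\sum_a\binom{c_a}{2}=\col_2(\omega)$ and $\sum_a\binom{c_a}{3}=\col_3(\omega)$ yields exactly the two-sided bound claimed.

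I expect the only genuine obstacle to be the first step: pinning down the permutation probability $\pf(\omega)$ and justifying its factorization over truncation-value blocks. This also implicitly uses that, for a deterministic adaptive adversary, a reply sequence $\omega$ determines the (distinct) query sequence, so that the event ``the replies equal $\omega$'' is precisely the event that the chosen $p$ or $f$ hits the prescribed prefixes. Once the ratio formula is in hand, the remainder is pure bookkeeping together with a term-by-term appeal to Lemma~\ref{lemma skt}.
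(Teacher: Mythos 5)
Your proposal is correct and follows essentially the same route as the paper's own proof: the same explicit formulas for $\pf(\omega)$ and $\pF(\omega)$, the same cancellation yielding the ratio $\prod_a\prod_{i=0}^{c_a-1}(1-i/2^m)\big/\prod_{i=0}^{q-1}(1-i/2^n)$, the same observation that $\col_2(\omega)\leq\binom{2^{m-1}}{2}$ forces each multiplicity to be at most $2^{m-1}$, and the same term-by-term application of Lemma~\ref{lemma skt} with $k=m$. The only difference is cosmetic (summing over all values $a$ rather than only the distinct ones appearing in $\omega$, and spelling out the counting argument for $\pf(\omega)$ that the paper leaves as ``easy to verify'').
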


\begin{proof}
Suppose that in the q-tuple $\omega$, exactly $\ell$ distinct vectors in $\{0,1\}^{n-m}$
appear, with multiplicities $d_1,d_2,\ldots,d_{\ell}$, respectively.
It is easy to verify that $$\pf(\omega)=\frac{\prod_{k=1}^{\ell}\left(\prod_{i=0}^{d_k-1}(2^m-i)\right)}{\prod_{i=0}^{q-1}(2^n-i)},$$
and clearly $\pF(\omega)=(1/2^{n-m})^q$. Therefore, using that $\sum_{k=1}^{\ell}d_k=q$,
$$\frac{\pf(\omega)}{\pF(\omega)}=\prod_{k=1}^{\ell}\left(\prod_{i=0}^{d_k-1}\left(1-\frac{i}{2^m}\right)\right)\cdot\frac{1}{\prod_{i=0}^{q-1}\left(1-\frac{i}{2^n}\right)},$$
hence
\begin{equation}\label{eq:pr_pr}
\ln\frac{\pf(\omega)}{\pF(\omega)}+\ln\prod_{i=0}^{q-1}\left(1-\frac{i}{2^n}\right)=\sum_{k=1}^{\ell}\ln\prod_{i=0}^{d_k-1}\left(1-\frac{i}{2^m}\right).
\end{equation}
For every $1\leq k\leq {\ell}$, note that $d_k\leq 2^{m-1}$, since $\sum_{k=1}^{\ell}\binom{d_k}{2}=\col_2(\omega)\leq\binom{2^{m-1}}{2}$, hence by Lemma \ref{lemma skt}, 
$$0\leq-\ln\prod_{i=0}^{d_k-1}\left(1-\frac{i}{2^m}\right)-\binom{d_k}{2}\frac{1}{2^m}\leq 4\binom{d_k}{3}\frac{1}{2^{2m}}+2\binom{d_k}{2}\frac{1}{2^{2m}}.$$
Summing up on  $1\leq k\leq\ell$, we get that
$$0\leq-\sum_{k=1}^{\ell}\ln\prod_{i=0}^{d_k-1}\left(1-\frac{i}{2^m}\right)-\frac{1}{2^m}\sum_{k=1}^{\ell}\binom{d_k}{2}\leq \frac{4}{2^{2m}}\sum_{k=1}^{\ell}\binom{d_k}{3}+\frac{2}{2^{2m}}\sum_{k=1}^{\ell}\binom{d_k}{2},$$
and the lemma follows by \eqref{eq:pr_pr} since $\sum_{k=1}^{\ell}\binom{d_k}{2}=\col_2(\omega)$ and $\sum_{k=1}^{\ell}\binom{d_k}{3}=\col_3(\omega)$.
\end{proof}

We are now ready to prove Proposition~\ref{proposition}.
\begin{proof}[Proof of Proposition~\ref{proposition}]
For every $\omega\in S$, by \eqref{p1a3} and the definition of $S$,
$$\col_2(\omega)\leq\binom{q}{2}\frac{1}{2^{n-m}}+\alpha\leq\binom{2^{m-1}}{2},$$
hence, by Lemma \ref{ln},
\begin{equation*}0\leq-\ln\frac{\pf(\omega)}{\pF(\omega)}-\ln\prod_{i=0}^{q-1}\left(1-\frac{i}{2^n}\right)-\frac{\col_2(\omega)}{2^m}\leq 4\frac{\col_3(\omega)}{2^{2m}}+2\frac{\col_2(\omega)}{2^{2m}}.\end{equation*}
In addition, by Lemma~\ref{lemma skt} (for $s=q$ and $k=n$), since $q\leq 2^{n-1}$,
\begin{equation*}\label{by5}0\leq-\ln\prod_{i=0}^{q-1}\left(1-\frac{i}{2^n}\right)-\frac{1}{2^n}\binom{q}{2}\leq 4\binom{q}{3}\frac{1}{2^{2n}}+2\binom{q}{2}\frac{1}{2^{2n}}.\end{equation*}
Therefore,
\begin{align}
-\ln\frac{\pf(\omega)}{\pF(\omega)}-\frac{1}{2^m}\left(\col_2(\omega)-\binom{q}{2}\frac{1}{2^{n-m}}\right)&\leq 4\frac{\col_3(\omega)}{2^{2m}}+2\frac{\col_2(\omega)}{2^{2m}},\label{-}\\
\ln\frac{\pf(\omega)}{\pF(\omega)}+\frac{1}{2^m}\left(\col_2(\omega)-\binom{q}{2}\frac{1}{2^{n-m}}\right)&\leq 4\binom{q}{3}\frac{1}{2^{2n}}+2\binom{q}{2}\frac{1}{2^{2n}}.\label{+}
\end{align}
By \eqref{eq:ln1}, \eqref{-} and the definition of $S$,
\begin{align} 
1-\frac{\pf(\omega)}{\pF(\omega)}&\leq-\ln\frac{\pf(\omega)}{\pF(\omega)}\leq\frac{1}{2^m}\left(\col_2(\omega)-\binom{q}{2}\frac{1}{2^{n-m}}\right)+2\frac{\col_2(\omega)}{2^{2m}}+4\frac{\col_3(\omega)}{2^{2m}}\leq\nonumber\\
&\leq \frac{\alpha}{2^m}+\frac{2}{2^{2m}}\left(\binom{q}{2}\frac{1}{2^{n-m}}+\alpha\right)+4\frac{\beta}{2^{2m}}=\left(1+\frac{2}{2^{m}}\right)\frac{\alpha}{2^m}+2\binom{q}{2}\frac{1}{2^{n+m}}+4\frac{\beta}{2^{2m}}\leq\nonumber\\
\label{1-}&\leq 2\frac{\alpha}{2^m}+2\binom{q}{2}\frac{1}{2^{n+m}}+4\frac{\beta}{2^{2m}}.
\end{align}
By \eqref{+} and the definition of $S$,
\begin{align}\ln\frac{\pf(\omega)}{\pF(\omega)}&\leq-\frac{1}{2^m}\left(\col_2(\omega)-\binom{q}{2}\frac{1}{2^{n-m}}\right)+ 4\binom{q}{3}\frac{1}{2^{2n}}+2\binom{q}{2}\frac{1}{2^{2n}}\leq\nonumber\\
\label{denum}&\leq\frac{\alpha}{2^m}+4\binom{q}{3}\frac{1}{2^{2n}}+2\binom{q}{2}\frac{1}{2^{2n}}.\end{align}
In particular, using \eqref{p1a2},
\begin{equation*}\frac{\pf(\omega)}{\pF(\omega)}\leq\exp\left(\frac{\alpha}{2^m}+4\binom{q}{3}\frac{1}{2^{2n}}+2\binom{q}{2}\frac{1}{2^{2n}}\right)<\exp\left(\frac{\alpha}{2^m}+\frac{2}{3}\cdot\frac{q^3}{2^{2n}}\right)<2,\end{equation*}
hence, if $\pf(\omega)/\pF(\omega)\geq 1$ then by \eqref{eq:ln2}, \eqref{denum} and \eqref{p1a4},
\begin{multline}\label{-1}
\frac{\pf(\omega)}{\pF(\omega)}-1\leq 2\left(\frac{\alpha}{2^m}+4\binom{q}{3}\frac{1}{2^{2n}}+2\binom{q}{2}\frac{1}{2^{2n}}\right)=\\
=2\frac{\alpha}{2^m}+\frac{4}{2^{n-m}}\binom{q}{2}\frac{1}{2^{n+m}}+4\cdot 2\binom{q}{3}\frac{1}{2^{2(n-m)}}\cdot\frac{1}{2^{2m}}\leq 2\frac{\alpha}{2^m}+2\binom{q}{2}\frac{1}{2^{n+m}}+4\frac{\beta}{2^{2m}}.
\end{multline}
The proposition now follows from \eqref{1-} and \eqref{-1}. 
\end{proof}

\subsection{Derivation of Theorem \ref{friendly_theorem}}\label{sec:combine}
We start by bounding $\pF(\bar{S})$ from above.
\begin{lemma}\label{chebyshev}
\begin{equation*}
\pF (\bar{S})\leq\binom{q}{2}\frac{1}{2^{n-m}}\cdot\frac{1}{{\alpha}^2}+\binom{q}{3}\frac{1}{2^{2(n-m)}}\cdot\frac{1}{\beta}\leq\frac{1}{2}\left(\frac{q}{2^{\frac{n+m}{2}}}\right)^2\left(\frac{2^m}{\alpha}\right)^2+\frac{1}{6\cdot 2^{\frac{n-3m}{2}}}\left(\frac{q}{2^{\frac{n+m}{2}}}\right)^3\frac{2^{2m}}{\beta}.
\end{equation*}
\end{lemma}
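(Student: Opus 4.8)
The plan is to decompose the complement event $\bar S$ into the two ways an $\omega$ can fail the defining conditions of $S$, and then to apply a concentration inequality to each piece, drawing the required moment estimates from Lemma~\ref{collisions}. Writing $\mu_2:=\binom{q}{2}\frac{1}{2^{n-m}}=\E_f\col_2$, we have
$$\bar S=\left\{\omega:\left\lvert\col_2(\omega)-\mu_2\right\rvert>\alpha\right\}\cup\left\{\omega:\col_3(\omega)>\beta\right\},$$
so by the union bound $\pF(\bar S)$ is at most the sum of the two corresponding probabilities.

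First I would bound the $\col_2$-deviation event by Chebyshev's inequality: $\pF(\lvert\col_2-\mu_2\rvert>\alpha)\leq \Var_f\col_2/\alpha^2$. Here the key input is Lemma~\ref{collisions} with $j=2$, where the factor $\bigl(1+q/2^{n-m}\bigr)^{j-2}$ collapses to $1$ and $\binom{j}{2}=1$, giving the clean bound $\Var_f\col_2\leq\binom{q}{2}\frac{1}{2^{n-m}}$. This yields the first summand $\binom{q}{2}\frac{1}{2^{n-m}}\cdot\frac{1}{\alpha^2}$. For the $\col_3$ event I would instead use Markov's inequality, $\pF(\col_3>\beta)\leq\E_f\col_3/\beta$, and substitute $\E_f\col_3=\binom{q}{3}\frac{1}{2^{2(n-m)}}$ from Lemma~\ref{collisions} with $j=3$, producing the second summand. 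Together these establish the first inequality of the statement.

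The second (explicit) inequality is then purely a matter of the elementary bounds $\binom{q}{2}\leq q^2/2$ and $\binom{q}{3}\leq q^3/6$, followed by rewriting powers of $2$ to match the normalized quantity $q/2^{(n+m)/2}$. Concretely, $\binom{q}{2}\frac{1}{2^{n-m}\alpha^2}\leq\frac{1}{2}\cdot\frac{q^2}{2^{n-m}\alpha^2}=\frac{1}{2}\bigl(q/2^{(n+m)/2}\bigr)^2\bigl(2^m/\alpha\bigr)^2$, since $2^{n-m}=2^{n+m}/2^{2m}$; and similarly $\binom{q}{3}\frac{1}{2^{2(n-m)}\beta}\leq\frac{1}{6}\cdot\frac{q^3}{2^{2(n-m)}\beta}$, which equals the stated second term after checking that the exponents balance, namely $\tfrac{n-3m}{2}+\tfrac{3(n+m)}{2}=2n$.

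I do not anticipate a genuine obstacle: the argument is a routine application of Chebyshev and Markov to the two tails, and the only point demanding any care is making sure the correct inequality is paired with the correct moment (variance with the two-sided $\col_2$ deviation, expectation with the one-sided $\col_3$ bound) and that the $j=2$ specialization of the variance formula is used to avoid an unnecessary $(1+q/2^{n-m})$ factor. The final rescaling is bookkeeping with exponents of $2$.
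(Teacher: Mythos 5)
Your proposal is correct and matches the paper's proof essentially verbatim: a union bound over the two defining conditions of $S$, Chebyshev's inequality for the $\col_2$ deviation paired with the $j=2$ variance bound from Lemma~\ref{collisions}, Markov's inequality for $\col_3$ paired with the $j=3$ expectation, and elementary estimates on the binomial coefficients for the second inequality. The only difference is that you spell out the exponent bookkeeping that the paper leaves implicit, and your arithmetic there is right.
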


\begin{proof}
By Chebyshev inequality,
$$\pF\left(\left\{\omega\in\Omega:\left\lvert \col_2(\omega)-\binom{q}{2}\frac{1}{2^{n-m}}\right\rvert>\alpha \right\}\right)\leq\frac{\Var_{{f}} \col_2}{{\alpha}^2},$$
and by Markov inequality,
$$\pF\left(\left\{\omega\in\Omega: \col_{3}(\omega)>\beta\right\}\right)\leq\frac{\E_{{f}} \col_{3}}{\beta}.$$
Using the union bound, we conclude that
$$\pF(\bar{S})\leq\frac{\Var_{{f}} \col_2}{{\alpha}^2}+\frac{\E_{{f}} \col_3}{\beta},$$
and the claim follows by Lemma \ref{collisions}.
\end{proof}

We now combine Lemma~\ref{advantage}, Lemma~\ref{chebyshev} and Proposition~\ref{proposition}.
\begin{lemma}
\label{S to Adv}
Suppose that $q\leq 2^{n-1}$,
\begin{align*} 
&\frac{\alpha}{2^m}+\frac{2}{3}\cdot\frac{1}{2^{\frac{n-3m}{2}}}\left(\frac{q}{2^{\frac{n+m}{2}}}\right)^3\leq\frac{1}{2},\\
&\left(\frac{q}{2^{\frac{n+m}{2}}}\right)^2+\frac{2\alpha}{2^{2m}}\leq\frac{1}{2}\left( \frac{1}{2}-\frac{1}{2^m}\right),\\ 
&\beta\geq 2\binom{q}{3}\frac{1}{2^{2(n-m)}}.
\end{align*} 
Then
\begin{multline}\label{lem7}Adv\leq \left(\frac{q}{2^{\frac{n+m}{2}}}\right)^2+\left(2\frac{\alpha}{2^m}+\frac{1}{2}\left(\frac{q}{2^{\frac{n+m}{2}}}\right)^2\left(\frac{2^m}{\alpha}\right)^2\right)+\\+\left(4\frac{\beta}{2^{2m}}+\frac{1}{6\cdot 2^{\frac{n-3m}{2}}}\left(\frac{q}{2^{\frac{n+m}{2}}}\right)^3\frac{2^{2m}}{\beta}\right).\end{multline}
\end{lemma}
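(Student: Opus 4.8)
The plan is to assemble the three ingredients already in hand---Lemma~\ref{advantage}, Proposition~\ref{proposition}, and Lemma~\ref{chebyshev}---once we have checked that the three hypotheses listed here are strong enough to license Proposition~\ref{proposition}. Lemma~\ref{advantage} gives $Adv\leq\max_{\omega\in S}\lvert\pf(\omega)/\pF(\omega)-1\rvert+\pF(\bar{S})$, so it suffices to bound the first summand by Proposition~\ref{proposition} and the second by the right-hand estimate of Lemma~\ref{chebyshev}, and then collect terms according to the grouping in \eqref{lem7}.

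First I would verify that the three displayed assumptions imply the hypotheses \eqref{p1a2}, \eqref{p1a3}, \eqref{p1a4} of Proposition~\ref{proposition}. Since the exponents in $\tfrac{2}{3}\cdot 2^{-(n-3m)/2}\bigl(q/2^{(n+m)/2}\bigr)^3$ add up to $2n$, this term equals $\tfrac{2}{3}\,q^3/2^{2n}$, so the first assumption is literally \eqref{p1a2}; the third assumption is literally \eqref{p1a4}. The only nontrivial translation is \eqref{p1a3}. Here I would use $\binom{q}{2}\leq q^2/2$ to get $\binom{q}{2}/2^{n-m}\leq\tfrac{2^{2m}}{2}\bigl(q/2^{(n+m)/2}\bigr)^2$, whence the second assumption yields $\binom{q}{2}/2^{n-m}+\alpha\leq\tfrac{2^{2m}}{2}\bigl((q/2^{(n+m)/2})^2+2\alpha/2^{2m}\bigr)\leq\tfrac{2^{2m}}{4}(\tfrac12-2^{-m})$. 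The point is then the exact identity $\binom{2^{m-1}}{2}=2^{2m-3}-2^{m-2}=\tfrac{2^{2m}}{4}(\tfrac12-2^{-m})$, which closes the gap and gives precisely \eqref{p1a3}.

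With the hypotheses of Proposition~\ref{proposition} in force (and $q\leq 2^{n-1}$ assumed throughout), I would apply it to get $\max_{\omega\in S}\lvert\pf/\pF-1\rvert\leq 2\alpha/2^m+2\binom{q}{2}/2^{n+m}+4\beta/2^{2m}$, and then replace the middle term using $2\binom{q}{2}=q(q-1)\leq q^2$, so that $2\binom{q}{2}/2^{n+m}\leq\bigl(q/2^{(n+m)/2}\bigr)^2$. Adding the Lemma~\ref{chebyshev} bound for $\pF(\bar{S})$ and feeding everything into Lemma~\ref{advantage} produces exactly the three grouped contributions in \eqref{lem7}: the bare $\bigl(q/2^{(n+m)/2}\bigr)^2$, the $\alpha$-pair $2\alpha/2^m+\tfrac12\bigl(q/2^{(n+m)/2}\bigr)^2(2^m/\alpha)^2$, and the $\beta$-pair $4\beta/2^{2m}+\tfrac{1}{6\cdot 2^{(n-3m)/2}}\bigl(q/2^{(n+m)/2}\bigr)^3\,2^{2m}/\beta$.

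The whole argument is bookkeeping in the normalized quantity $q/2^{(n+m)/2}$, so there is no serious obstacle; the one place demanding care is the verification of \eqref{p1a3}, where one must notice that the constant $\tfrac14(\tfrac12-2^{-m})$ in the second hypothesis is exactly $\binom{2^{m-1}}{2}/2^{2m}$, which is what makes the translation tight. Everything else is substitution and regrouping.
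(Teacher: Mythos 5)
Your proof is correct and follows the same route as the paper: combine Lemma~\ref{advantage}, Proposition~\ref{proposition} (with the bound $2\binom{q}{2}/2^{n+m}\leq\bigl(q/2^{\frac{n+m}{2}}\bigr)^2$ for the middle term), and Lemma~\ref{chebyshev}. The only difference is that the paper dismisses the hypothesis-checking with ``all the conditions of Proposition~\ref{proposition} are clearly satisfied,'' whereas you carry it out explicitly---including the exact identity $\binom{2^{m-1}}{2}=\tfrac{2^{2m}}{4}\bigl(\tfrac12-\tfrac{1}{2^m}\bigr)$ behind \eqref{p1a3}---which is a welcome level of detail but not a different argument.
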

\begin{proof} All the conditions of Proposition \ref{proposition} are clearly satisfied. Therefore,
\begin{gather*}\max_{\omega\in S}\left\lvert\frac{\pf(\omega)}{\pF(\omega)}-1\right\rvert\leq  2\frac{\alpha}{2^m}+2\binom{q}{2}\frac{1}{2^{n+m}}+4\frac{\beta}{2^{2m}}
\leq 2\frac{\alpha}{2^m}+\left(\frac{q}{2^{\frac{n+m}{2}}}\right)^2+4\frac{\beta}{2^{2m}},\end{gather*}
and the claim follows by Lemma \ref{advantage} and Lemma \ref{chebyshev}.
\end{proof}
Theorem \ref{friendly_theorem} now follows by taking  $\alpha,\beta$ to minimize the right hand side of~\eqref{lem7}.
\begin{proof}[Proof of Theorem \ref{friendly_theorem}] 
If $q\geq\frac{1}{4}2^{\frac{m+n}{2}}$ then \eqref{thm1} holds since surely $Adv\leq 1$ and 
\begin{equation*}1=2\sqrt[3]{2}\left(\frac{1}{4}\right)^{2/3}\leq 2\sqrt[3]{2}\left(\frac{q}{2^{\frac{n+m}{2}}}\right)^{2/3}<2\sqrt[3]{2}\left(\frac{q}{2^{\frac{n+m}{2}}}\right)^{2/3}+\frac{2\sqrt{2}}{\sqrt{3}}\left(\frac{q}{2^{\frac{n+m}{2}}}\right)^{3/2}+\left(\frac{q}{2^{\frac{n+m}{2}}}\right)^2.
\end{equation*}
If $m\leq 5$ and $q<\frac{1}{4}2^{\frac{m+n}{2}}$, then \eqref{thm1} also holds, since by the "Birthday" bound \eqref{birthday}, which is obviously still valid when the adversary gets only truncated replies from the oracle,
\begin{multline*}
Adv<\frac{q^2}{2^{n+1}}=2^{m-5}\left(\frac{4q}{2^{\frac{m+n}{2}}}\right)^2<\left(\frac{4q}{2^{\frac{m+n}{2}}}\right)^{2/3}=2\sqrt[3]{2}\left(\frac{q}{2^{\frac{n+m}{2}}}\right)^{2/3}<\\
<2\sqrt[3]{2}\left(\frac{q}{2^{\frac{n+m}{2}}}\right)^{2/3}+\frac{2\sqrt{2}}{\sqrt{3}}\left(\frac{q}{2^{\frac{n+m}{2}}}\right)^{3/2}+\left(\frac{q}{2^{\frac{n+m}{2}}}\right)^2.
\end{multline*}
Finally, if $6\leq m\leq\frac{n}{3}$ and $q<\frac{1}{4}2^{\frac{m+n}{2}}$, then it is straightforward to verify that all the conditions of Lemma \ref{S to Adv} are satisfied if we choose
$$\alpha:=\frac{2^m}{\sqrt[3]{4}}\left(\frac{q}{2^{\frac{n+m}{2}}}\right)^{2/3},\quad\beta:=\frac{2^{2m}}{2\sqrt{6}\cdot 2^{\frac{n-3m}{4}}}\left(\frac{q}{2^{\frac{n+m}{2}}}\right)^{3/2}.$$
Then, by Lemma \ref{S to Adv},
\begin{align*}
Adv\leq&\left(\frac{q}{2^{\frac{n+m}{2}}}\right)^2+\left(2\frac{\alpha}{2^m}+\frac{1}{2}\left(\frac{q}{2^{\frac{n+m}{2}}}\right)^2\left(\frac{2^m}{\alpha}\right)^2\right)+\left(4\frac{\beta}{2^{2m}}+\frac{1}{6\cdot 2^{\frac{n-3m}{2}}}\left(\frac{q}{2^{\frac{n+m}{2}}}\right)^3\frac{2^{2m}}{\beta}\right)=\\
=&\left(\frac{q}{2^{\frac{n+m}{2}}}\right)^2+2\sqrt[3]{2}\left(\frac{q}{2^{\frac{n+m}{2}}}\right)^{2/3}+\frac{2\sqrt{2}}{\sqrt{3}\cdot 2^{\frac{n-3m}{4}}}\left(\frac{q}{2^{\frac{n+m}{2}}}\right)^{3/2}\leq\\
\leq& 2\sqrt[3]{2}\left(\frac{q}{2^{\frac{n+m}{2}}}\right)^{2/3}+\frac{2\sqrt{2}}{\sqrt{3}}\left(\frac{q}{2^{\frac{n+m}{2}}}\right)^{3/2}+\left(\frac{q}{2^{\frac{n+m}{2}}}\right)^2.
\qedhere\end{align*}
\end{proof}

\section{Proof of Theorem \ref{friendly_theorem2}}
\label{sec:main_theorem2}

The proof of Theorem \ref{friendly_theorem2} is more elaborate and technical than the proof of Theorem \ref{friendly_theorem}, but goes along a similar path. It uses statements that are analogous to those used in the proof of Theorem \ref{friendly_theorem}.
As mentioned in Section \ref{sec3}, we let
\begin{equation*}S=\left\{\omega\in\Omega\mid \forall 2\leq j\leq t:\left\lvert \col_j(\omega)-\binom{q}{j}\frac{1}{2^{(j-1)(n-m)}}\right\rvert\leq\alpha_j,\,\col_{t+1}(\omega)\leq\beta\right\},\end{equation*}
where $t\geq 2$ is an integer and $\alpha_1,\alpha_2,\ldots,\alpha_{t-1},\beta$ are positive real numbers to be specifired later.

\subsection{Bounding $\lvert \pf/\pF-1 \rvert$ in $S$}
\label{subsec:technical2}

The following proposition is analogous to Proposition \ref{proposition}.

\begin{proposition}\label{proposition2}
Suppose that $m\leq n-2$, $q\leq 2^{n-1}$, $2\leq t\leq 2^{(m-1)/2}+1$, 
\begin{align}
&\binom{q}{2}\frac{1}{2^{n-m}}+\alpha_1\leq\binom{2^{m-1}}{2},\label{eq:assumption_2_1}\\
&4\left(\frac{q}{2^n}\right)^2+\frac{1}{2t(t+1)2^{\frac{n-m}{2}(t-\frac{n+m}{n+m})}}\left(\frac{2(q+t-1)}{2^{\frac{n+m}{2}}}\right)^{t+1}+\sum_{j=1}^{t-1}\frac{(j-1)!}{2^{jm}}\alpha_j\leq\frac{1}{2},\label{eq:assumption_2_2}\\
&\beta\geq 2\binom{q}{t+1}\frac{1}{2^{t(n-m)}}.\label{eq:assumption_2_3}
\end{align} 
Then for every $\omega\in S$,
\begin{multline*}\left\lvert\frac{\pf(\omega)}{\pF(\omega)}-1\right\rvert\leq 4\left(1+\left(\frac{2(t-1)}{2^m}+\frac{2q}{2^n}\right)^{t-2}\right)\left(\frac{q}{2^{\frac{n+m}{2}}}\right)^2+\\
+2\left(1+\frac{4}{2^m}\right)\frac{\alpha_1}{2^m}+\left(\sum_{j=2}^{t-1}\left(1+2^j\right)\frac{(j-1)!}{2^{jm}}\alpha_j\right)+\frac{2^t(t -1)!}{2^{tm}}\beta.
\end{multline*}
\end{proposition}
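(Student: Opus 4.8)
The plan is to mirror the proof of Proposition~\ref{proposition}, upgrading the first‑order expansion of $-\ln\prod_{i=0}^{s-1}(1-i/2^k)$ (Lemma~\ref{lemma skt}) to an expansion truncated at collision‑order $t$. The first step is a quantitative Taylor expansion: for admissible $s,k$ and every integer $t\ge 2$, write
$$-\ln\prod_{i=0}^{s-1}\left(1-\frac{i}{2^k}\right)=\sum_{j=2}^{t}\frac{(j-2)!}{2^{(j-1)k}}\binom{s}{j}+R_{s,k,t},$$
with an explicit nonnegative remainder $R_{s,k,t}$. This follows from $-\ln(1-i/2^k)=\sum_{r\ge 1}(i/2^k)^r/r$, summed over $i$ using $\sum_{i=0}^{s-1}i^{\underline{r}}=r!\binom{s}{r+1}$ (hockey‑stick) and the expansion of powers into falling factorials via (nonnegative) Stirling numbers of the second kind. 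The term $a=r$ shows that the leading contribution of $i^r$ is $r!\binom{s}{r+1}$, so the coefficient of $\binom{s}{j}$ at leading order in $1/2^k$ is exactly $(j-2)!/2^{(j-1)k}$ (taking $j=r+1$); all subleading contributions are positive and are collected into $R_{s,k,t}$, which I would bound from above by a $\binom{s}{t+1}/2^{tk}$‑type tail plus a second‑order correction.

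Next, exactly as in Lemma~\ref{ln}, I would apply this expansion with $k=m$ to every multiplicity $d_k$ and with $k=n,\ s=q$ to the factor $\prod_{i=0}^{q-1}(1-i/2^n)$, then sum over distinct replies using $\sum_k\binom{d_k}{j}=\col_j(\omega)$. The decisive cancellation is that $\frac{(j-2)!}{2^{(j-1)m}}$ times the function‑side mean $\binom{q}{j}2^{-(j-1)(n-m)}$ equals $(j-2)!\binom{q}{j}/2^{(j-1)n}$, which is precisely the $j$‑th term of the $2^n$‑side expansion; hence, up to remainders,
$$-\ln\frac{\pf(\omega)}{\pF(\omega)}=\sum_{j\ge 2}\frac{(j-2)!}{2^{(j-1)m}}\left(\col_j(\omega)-\binom{q}{j}\frac{1}{2^{(j-1)(n-m)}}\right)+(\text{remainders}).$$
On $S$ the orders $2\le j\le t$ are controlled by $|\col_j-\E_f\col_j|\le\alpha_{j-1}$, producing the $\alpha_j$‑contributions, while the orders $j\ge t+1$ are folded into the $\beta$‑term through $\col_{t+1}(\omega)\le\beta$ together with the monotone estimate $\binom{d}{j}\le\binom{d}{t+1}(d/(t+2))^{j-t-1}$ (valid since assumption~\eqref{eq:assumption_2_1} forces $\col_2(\omega)\le\binom{2^{m-1}}{2}$, hence each $d_k\le 2^{m-1}$, which also legitimizes the expansion). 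With two‑sided bounds in hand I would convert to $\lvert\pf/\pF-1\rvert$ using \eqref{eq:ln1} on the branch $\pf/\pF\le 1$ and \eqref{eq:ln2} on the branch $\pf/\pF\ge 1$, where assumption~\eqref{eq:assumption_2_2} guarantees $\pf/\pF<2$ so that \eqref{eq:ln2} applies, just as \eqref{p1a2} was used in Proposition~\ref{proposition}.

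The main obstacle is the remainder bookkeeping with explicit constants. The second‑order piece of the $k=m$ remainder is $\approx 2\col_2/2^{2m}\approx (q/2^{\frac{n+m}{2}})^2$, exactly as in \eqref{1-}; carrying the expansion to order $t$ accumulates the subleading corrections of the intermediate orders, and I expect to bound their total by a geometric‑type estimate with ratio $\tfrac{2(t-1)}{2^m}+\tfrac{2q}{2^n}$, yielding the factor $1+\bigl(\tfrac{2(t-1)}{2^m}+\tfrac{2q}{2^n}\bigr)^{t-2}$ in front of $(q/2^{\frac{n+m}{2}})^2$; the constraint $t\le 2^{(m-1)/2}+1$ is what keeps this ratio small and the tails summable. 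Dually, the $k=n,\ s=q$ remainder contributes a tail of order $\binom{q+t-1}{t+1}$‑type, and checking that this is precisely the second summand of \eqref{eq:assumption_2_2} (after simplifying $2^{\frac{n-m}{2}(t-1)}$, noting $\frac{n+m}{n+m}=1$, to $\frac{2^t(q+t-1)^{t+1}}{t(t+1)2^{nt+m}}$) is the analogue of the role played by $\tfrac23 q^3/2^{2n}$ in \eqref{p1a2}. The two delicate points will be (i) bounding $\sum_{i=0}^{s-1}i^r$ by the correct shifted binomials so the high‑order tail telescopes into a single $\binom{s}{t+1}$ with a clean constant (the source of the $2^t$ overhead on the $\beta$‑term and the $1+2^j$ overheads on the $\alpha_j$‑terms), and (ii) keeping every constant explicit so that assumptions \eqref{eq:assumption_2_1}--\eqref{eq:assumption_2_3} exactly suffice. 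Everything downstream — combining with Lemma~\ref{advantage} and the Chebyshev/Markov estimate of $\pF(\bar S)$ via Lemma~\ref{collisions} — then parallels Subsection~\ref{sec:combine}.
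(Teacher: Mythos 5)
Your plan follows the paper's route almost exactly: expand $-\ln\prod_{i=0}^{s-1}\left(1-i/2^k\right)$ into binomial coefficients with leading coefficients $(j-2)!/2^{(j-1)k}$, apply this with $k=m$ to each multiplicity $d_k$ and with $k=n$, $s=q$, exploit the cancellation $\frac{(j-2)!}{2^{(j-1)m}}\cdot\binom{q}{j}2^{-(j-1)(n-m)}=\frac{(j-2)!}{2^{(j-1)n}}\binom{q}{j}$, and finish with the two branches \eqref{eq:ln1}/\eqref{eq:ln2}, assumption \eqref{eq:assumption_2_2} guaranteeing $\pf/\pF<2$. This is precisely the content of Lemmas \ref{short}, \ref{lemma ikt2}, \ref{lemma skt2}, \ref{ln2} and \ref{tech}; your use of Stirling numbers of the second kind in place of the paper's shifted-binomial estimate (Lemma \ref{short}) is an immaterial variation, and your identification of the role of \eqref{eq:assumption_2_1} (forcing every $d_k\leq 2^{m-1}$) and of the $\binom{q+t-1}{t+1}$-type term in \eqref{eq:assumption_2_2} is correct.

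The genuine gap is your treatment of collision orders larger than $t+1$. You keep the full logarithmic series and propose to fold all orders $j\geq t+2$ into the $\beta$-term using $\col_{t+1}(\omega)\leq\beta$ together with $\binom{d}{j}\leq\binom{d}{t+1}\left(d/(t+2)\right)^{j-t-1}$. But the coefficient multiplying $\binom{d}{j}$ in the expansion is $(j-2)!/2^{(j-1)m}$, so writing $j=t+1+i$, your bound on the per-multiplicity tail is
$$\frac{(t-1)!}{2^{tm}}\binom{d}{t+1}\sum_{i\geq 1}\frac{(t+i-1)!}{(t-1)!}\left(\frac{d}{(t+2)2^m}\right)^{i},$$
and the series $\sum_{i}\frac{(t+i-1)!}{(t-1)!}\,c^{i}$ diverges for every fixed $c>0$: the factorial growth of the coefficients beats the geometric decay supplied by your monotone estimate. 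Concretely, the ratio of consecutive terms is $(t+i)c$, which exceeds $1$ once $i\gtrsim (t+2)2^m/d$; the nonzero terms extend up to $j=d$, and $d$ may be as large as $2^{m-1}$, so the bound is dominated by its last terms (of size roughly $\frac{(d-2)!}{(t-1)!}\bigl(2(t+2)\bigr)^{-(d-t-1)}$), which are astronomically large. The step therefore yields nothing, even though the true tail is tiny — the loss is entirely in the estimate. The cure, which is exactly what the paper does in Lemma \ref{lemma ikt2}, is to sum the tail in \emph{power} form before converting to binomials: for $x=i/2^m\leq\frac{1}{2}$ one has $0\leq-\ln(1-x)-\sum_{r=1}^{t-1}x^r/r\leq\frac{2^t}{t}x^t$ (Lagrange remainder; the direct geometric bound $\frac{x^t}{t(1-x)}\leq\frac{2}{t}x^t$ would also do), and only then is the single power $i^t$ expanded into $\binom{i}{r}$, $r\leq t$, via Lemma \ref{short}. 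That way no collision statistic beyond $\col_{t+1}$ ever appears, and this step is precisely the source of the $2^t(t-1)!/2^{tm}$ overhead on $\beta$ and, combined with $(t-1)^2\leq 2^{m-1}$ (the hypothesis $t\leq 2^{(m-1)/2}+1$), of the $(1+2^j)$ overheads on the $\alpha_j$. With your tail device replaced by this truncation, the rest of your outline goes through and reproduces the stated bound.
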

In the proof of the proposition we use several lemmas.
\begin{lemma}\label{short}
For every integers $i\geq 0$, $j\geq 1$,
$$0\leq\frac{ i^j}{j!}-\binom{i}{j}\leq\sum_{r=1}^{j-1}\binom{j-1}{r-1}\binom{i}{r}.$$
\end{lemma}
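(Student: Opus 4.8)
The plan is to prove the two inequalities separately. The lower bound is immediate, and the upper bound needs a little work with Stirling numbers of the second kind.

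For the lower bound, observe that $j!\binom ij = i(i-1)\cdots(i-j+1)$ is a product of $j$ integers, each at most $i$. When $i\ge j-1$ all of them are nonnegative, so the product is at most $i^j$; when $0\le i\le j-1$ one factor vanishes, so $j!\binom ij=0\le i^j$. In either case $\binom ij\le i^j/j!$, which is the left inequality. (As noted below, it also drops out of the identity used for the upper bound.)

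For the upper bound I would start from the expansion of a power in falling factorials,
$$i^j=\sum_{r=1}^{j}S(j,r)\,i(i-1)\cdots(i-r+1)=\sum_{r=1}^{j}S(j,r)\,r!\binom ir,$$
where $S(j,r)$ is the Stirling number of the second kind (equivalently, $S(j,r)\,r!\binom ir$ counts the functions $[j]\to[i]$ whose image has size exactly $r$, so the sum is $i^j$). Since $S(j,j)=1$, the $r=j$ term equals $j!\binom ij$, and dividing by $j!$ gives the exact identity
$$\frac{i^j}{j!}-\binom ij=\sum_{r=1}^{j-1}\frac{r!\,S(j,r)}{j!}\,\binom ir.$$
Nonnegativity of every summand re-proves the lower bound, and it reduces the upper bound to the term-by-term coefficient estimate
$$\frac{r!\,S(j,r)}{j!}\le\binom{j-1}{r-1}\qquad(1\le r\le j),$$
since multiplying by $\binom ir\ge0$ and summing over $1\le r\le j-1$ then yields the claim.

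I would prove this coefficient bound by induction on $j$. Writing $T(j,r):=r!\,S(j,r)/j!$, the Stirling recurrence $S(j,r)=r\,S(j-1,r)+S(j-1,r-1)$ becomes $T(j,r)=\tfrac{r}{j}\bigl(T(j-1,r)+T(j-1,r-1)\bigr)$, with the usual convention that out-of-range indices give vanishing terms (as do the matching binomial coefficients). The base case $j=1$ is $T(1,1)=1=\binom00$. For the inductive step, the hypotheses $T(j-1,r)\le\binom{j-2}{r-1}$ and $T(j-1,r-1)\le\binom{j-2}{r-2}$ together with Pascal's identity give
$$T(j,r)\le\frac{r}{j}\left(\binom{j-2}{r-1}+\binom{j-2}{r-2}\right)=\frac{r}{j}\binom{j-1}{r-1}\le\binom{j-1}{r-1},$$
the last step because $r\le j$. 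This closes the induction and finishes the proof. The only genuinely delicate point is this coefficient inequality, and even there the work is light once the reduction is made: the single useful observation is that the factor $r/j\le1$ exactly absorbs the discrepancy between the Stirling recurrence and Pascal's triangle. I would just be careful to state the index conventions so that the boundary cases ($r=1$, $r=j$, and negative indices) are handled uniformly.
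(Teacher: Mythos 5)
Your proof is correct, but it takes a genuinely different route from the paper's. The paper sandwiches $i^j$ between the falling and rising factorials,
$$i(i-1)\cdots(i-(j-1))\;\leq\; i^j\;\leq\;(i+j-1)\cdots(i+1)i,$$
which after dividing by $j!$ gives $\binom{i}{j}\leq i^j/j!\leq\binom{i+j-1}{j}$, and then finishes with the Vandermonde identity $\binom{i+j-1}{j}=\sum_{r=1}^{j}\binom{j-1}{r-1}\binom{i}{r}$, whose top term ($r=j$) is exactly $\binom{i}{j}$; moving it to the left side yields both inequalities in essentially three lines, using only a product comparison and one classical identity. You instead expand $i^j$ \emph{exactly} in the binomial basis via Stirling numbers of the second kind, $i^j=\sum_{r=1}^{j}S(j,r)\,r!\binom{i}{r}$, and then prove the term-by-term coefficient bound $r!\,S(j,r)/j!\leq\binom{j-1}{r-1}$ by induction through the Stirling recurrence and Pascal's rule (your recurrence $T(j,r)=\tfrac{r}{j}\bigl(T(j-1,r)+T(j-1,r-1)\bigr)$ and the boundary conventions check out, including $r=1$ and $r=j$). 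Your route is heavier---it needs the Stirling machinery and a separate induction---but it buys strictly more information: an exact formula for the gap $i^j/j!-\binom{i}{j}$, of which the lemma's right-hand side is a coefficientwise overestimate, with the lower bound falling out of the same identity for free. The paper's argument is the one to prefer for brevity; yours explains precisely where the slack in the upper bound lives.
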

\begin{proof} Note that
\begin{equation*} i(i-1)\ldots(i-(j-1))\leq i^j\leq (i+j-1)\ldots(i+1)i,$$ 
hence
$$\binom{i}{j}\leq\frac{i^j}{j!}\leq\binom{i+j-1}{j}=\sum_{r=1}^j\binom{j-1}{j-r}\binom{i}{r},\end{equation*}
and the claim follows.
\end{proof}
With this we prove the following lemma.
\begin{lemma}\label{lemma ikt2}
Let $ i,k,t$ be integers such that $k\geq 1$, $0\leq i\leq 2^{k-1}$, $t\geq 2$. Then,
\begin{multline}\label{eq:ikt2}0\leq-\ln\left(1-\frac{i}{2^k}\right)-\sum_{j=1}^{t-1}\frac{(j-1)!}{2^{jk}}\binom{i}{j}\leq\\
\leq\sum_{j=1}^{t-1}\frac{(j-1)!}{2^{jk}}\sum_{r=1}^{j-1}\binom{j-1}{r-1}\binom{i}{r}+\frac{2^t(t-1)!}{2^{tk}}\sum_{r=1}^{t}\binom{t-1}{r-1}\binom{i}{r}.
\end{multline}
\end{lemma}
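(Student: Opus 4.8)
The plan is to start from the power-series expansion of $-\ln(1-x)$ and compare it term by term against the target sum. Writing $x=i/2^k$, the hypothesis $0\le i\le 2^{k-1}$ guarantees $0\le x\le 1/2$, so the series $-\ln(1-x)=\sum_{j=1}^{\infty}x^j/j$ converges, and after rewriting $1/j=(j-1)!/j!$ it reads
\[
-\ln\left(1-\frac{i}{2^k}\right)=\sum_{j=1}^{\infty}\frac{(j-1)!}{2^{jk}}\cdot\frac{i^j}{j!}.
\]
I would then split this sum at $j=t$: the head $j=1,\dots,t-1$ will be matched against $\sum_{j=1}^{t-1}\frac{(j-1)!}{2^{jk}}\binom{i}{j}$, while the tail $j\ge t$ is estimated separately.

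For the lower bound I would invoke the left inequality of Lemma \ref{short}, namely $\binom{i}{j}\le i^j/j!$, which makes every head term $\frac{(j-1)!}{2^{jk}}\left(\frac{i^j}{j!}-\binom{i}{j}\right)$ nonnegative, while the entire tail is manifestly nonnegative; subtracting the target sum therefore leaves only nonnegative quantities, giving the asserted $0\le$ the difference.

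For the upper bound the head is handled once more by Lemma \ref{short}, whose right inequality gives $\frac{i^j}{j!}-\binom{i}{j}\le\sum_{r=1}^{j-1}\binom{j-1}{r-1}\binom{i}{r}$; multiplying by $\frac{(j-1)!}{2^{jk}}$ and summing over $j=1,\dots,t-1$ reproduces exactly the first sum on the right-hand side of \eqref{eq:ikt2}. The remaining task, which I expect to be the only delicate point, is to control the tail $\sum_{j\ge t}\frac{1}{j}x^j$. Here I would use $1/j\le 1/t$ together with $x\le 1/2$ to dominate it by a geometric series, $\sum_{j\ge t}\frac{1}{j}x^j\le\frac{1}{t}\cdot\frac{x^t}{1-x}\le\frac{2}{t}x^t$, and then rewrite $\frac{2}{t}x^t=\frac{2(t-1)!}{2^{tk}}\cdot\frac{i^t}{t!}$. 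Applying $\frac{i^t}{t!}\le\binom{i+t-1}{t}=\sum_{r=1}^{t}\binom{t-1}{r-1}\binom{i}{r}$ (the intermediate inequality already established inside the proof of Lemma \ref{short}, with $j=t$) bounds the tail by $\frac{2(t-1)!}{2^{tk}}\sum_{r=1}^{t}\binom{t-1}{r-1}\binom{i}{r}$, which is at most the claimed $\frac{2^t(t-1)!}{2^{tk}}\sum_{r=1}^{t}\binom{t-1}{r-1}\binom{i}{r}$ since $t\ge 1$. Adding the head and tail upper bounds yields \eqref{eq:ikt2}. The one place requiring care is the geometric domination, which rests precisely on $x\le 1/2$ (so that $1/(1-x)\le 2$), i.e.\ on the hypothesis $i\le 2^{k-1}$; everything else is bookkeeping with the two inequalities of Lemma \ref{short}.
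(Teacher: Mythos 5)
Your proof is correct, and it follows the same skeleton as the paper's: expand $-\ln(1-x)$ at $x=i/2^k$, match the terms of order $j\le t-1$ against $\frac{(j-1)!}{2^{jk}}\binom{i}{j}$ via the two inequalities of Lemma \ref{short}, and then control the remainder of order $\ge t$. The only place you diverge is the remainder: the paper writes the Lagrange form of the Taylor remainder, $\frac{x^t}{t(1-\xi)^t}$ with $0<\xi<x\le 1/2$, and bounds $(1-\xi)^{-t}\le 2^t$ to get $\frac{2^t}{t}x^t$, whereas you keep the full series tail and dominate it geometrically, $\sum_{j\ge t}\frac{x^j}{j}\le\frac{1}{t}\cdot\frac{x^t}{1-x}\le\frac{2}{t}x^t$. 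Your estimate is sharper by a factor of $2^{t-1}$: in fact, your argument proves the lemma with the coefficient $\frac{2^t(t-1)!}{2^{tk}}$ of the last sum replaced by $\frac{2(t-1)!}{2^{tk}}$, an improvement that would propagate to slightly better constants downstream (Lemmas \ref{lemma skt2}, \ref{ln2} and Proposition \ref{proposition2}), though the $2^t$ form is all the paper's stated results need. Both arguments use the hypothesis $i\le 2^{k-1}$ in exactly the same place, and your appeal to the intermediate inequality $i^t/t!\le\binom{i+t-1}{t}$ from the proof of Lemma \ref{short} is legitimate; alternatively it follows from the statement of Lemma \ref{short} itself, since $\binom{i}{t}+\sum_{r=1}^{t-1}\binom{t-1}{r-1}\binom{i}{r}=\sum_{r=1}^{t}\binom{t-1}{r-1}\binom{i}{r}$.
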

\begin{proof}
By Taylor expansion, for every $x>0$ 
$$-\ln(1-x)=\sum_{j=1}^{t-1}\frac{x^j}{j}+\frac{x^t}{t(1-\xi)^t}$$
for some $0<\xi<x$. It follows that for every $0\leq x\leq\frac{1}{2}$,
$$0\leq-\ln(1-x)-\sum_{j=1}^{t-1}\frac{x^j}{j}\leq\frac{{2}^t}{t}x^{t}.$$
In particular,
\begin{equation}\label{4.4}0\leq-\ln\left(1-\frac{i}{2^k}\right)-\sum_{j=1}^{t-1}\frac{1}{j}\left(\frac{i}{2^k}\right)^j\leq\frac{2^t}{t}\left(\frac{i}{2^k}\right)^t.\end{equation}
and by Lemma \ref{short},
$$0\leq\sum_{j=1}^{t-1}\frac{(j-1)!}{2^{jk}}\left(\frac{i^j}{j!}-\binom{i}{j}\right)\leq\sum_{j=1}^{t-1}\frac{(j-1)!}{2^{jk}}\sum_{r=1}^{j-1}\binom{j-1}{r-1}\binom{i}{r},$$
hence
\begin{equation*}0\leq-\ln\left(1-\frac{i}{2^k}\right)-\sum_{j=1}^{t-1}\frac{(j-1)!}{2^{jk}}\binom{i}{j}\leq\sum_{j=1}^{t-1}\frac{(j-1)!}{2^{jk}}\sum_{r=1}^{j-1}\binom{j-1}{r-1}\binom{i}{r}+\frac{2^t}{t}\left(\frac{i}{2^k}\right)^t,
\end{equation*}
and the lemma follows since by Lemma \ref{short},
\begin{equation*}\frac{2^t}{t}\left(\frac{i}{2^k}\right)^t=\frac{2^t(t-1)!}{2^{tk}}\cdot\frac{i^t}{t!}\leq\frac{2^t(t-1)!}{2^{tk}}\sum_{r=1}^{t}\binom{t-1}{r-1}\binom{i}{r}.\qedhere\end{equation*}
\end{proof}
This leads to the following lemma, which is a generalization of Lemma \ref{lemma skt}.
\begin{lemma}\label{lemma skt2}
Let $s,k,t$ be positive integers such that $s\leq 2^{k-1}$, $2\leq t\leq 2^{k/2}+2$. Then,
\begin{equation*}
0\leq -\ln\prod_{i=0}^{s-1}\left(1-\frac{i}{2^k}\right)-\sum_{j=1}^{t-1}\frac{(j-1)!}{2^{jk}}\binom{s}{j+1}\leq 8\binom{s}{2}\frac{1}{2^{2k}}+
\frac{2^t(t -1)!}{2^{tk}}\sum_{j=1}^{t}\binom{t-1}{j-1}\binom{s}{j+1}.
\end{equation*}
\end{lemma}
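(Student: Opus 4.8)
The plan is to mimic the proof of Lemma~\ref{lemma skt}: apply the pointwise estimate of Lemma~\ref{lemma ikt2} at each $i$ and sum over $0\le i\le s-1$. The hypothesis $s\le 2^{k-1}$ guarantees $0\le i\le 2^{k-1}$ for every term, so Lemma~\ref{lemma ikt2} applies. Summing via the hockey-stick identity $\sum_{i=0}^{s-1}\binom{i}{r}=\binom{s}{r+1}$ (already used for Lemma~\ref{lemma skt}) replaces $\sum_{i=0}^{s-1}\binom{i}{j}$ by $\binom{s}{j+1}$ everywhere, so the subtracted sum $\sum_{i}\sum_{j}\frac{(j-1)!}{2^{jk}}\binom{i}{j}$ becomes exactly $\sum_{j=1}^{t-1}\frac{(j-1)!}{2^{jk}}\binom{s}{j+1}$, while $-\sum_i\ln(1-i/2^k)=-\ln\prod_{i=0}^{s-1}(1-i/2^k)$. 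The lower bound $0$ is immediate from the nonnegativity of each summand in Lemma~\ref{lemma ikt2}. On the right, the second piece $\frac{2^t(t-1)!}{2^{tk}}\sum_{i}\sum_{r=1}^{t}\binom{t-1}{r-1}\binom{i}{r}$ sums directly to $\frac{2^t(t-1)!}{2^{tk}}\sum_{j=1}^{t}\binom{t-1}{j-1}\binom{s}{j+1}$, which is precisely the second term claimed. Hence everything reduces to bounding the remaining double sum by the first claimed term:
\[
\sum_{j=2}^{t-1}\frac{(j-1)!}{2^{jk}}\sum_{r=1}^{j-1}\binom{j-1}{r-1}\binom{s}{r+1}\le 8\binom{s}{2}\frac{1}{2^{2k}}.
\]

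To prove this I would first peel off a factor $\binom{s}{2}$. Since $s\le 2^{k-1}$, for $r\ge 1$,
\[
\binom{s}{r+1}=\binom{s}{2}\frac{2(s-2)\cdots(s-r)}{(r+1)!}\le\binom{s}{2}\frac{2\,s^{r-1}}{(r+1)!}\le\binom{s}{2}\frac{2\cdot 2^{(k-1)(r-1)}}{(r+1)!}.
\]
Substituting and collecting the powers $2^{2k}$, $2^{-jk}$, $2^{(k-1)(r-1)}$ and the extra factor $2$, the claim becomes $W\le 8$, where
\[
W:=\sum_{j=2}^{t-1}(j-1)!\sum_{r=1}^{j-1}\frac{\binom{j-1}{r-1}}{(r+1)!}\,2^{\,k(1-j+r)-r+2}.
\]

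Finally I would split $W$ along the diagonal $r=j-1$ and the off-diagonal $r\le j-2$. On the diagonal $1-j+r=0$, the coefficient collapses to $\tfrac{j-1}{j}<1$ and the power is $2^{3-j}$, so the diagonal contributes at most $\sum_{j\ge 2}2^{3-j}=4$. The off-diagonal part is the main obstacle, and it is precisely here that the hypothesis $t\le 2^{k/2}+2$ (equivalently $j-1\le 2^{k/2}$) is indispensable: a bound using only $k\ge 1$ diverges, since $(j-1)!\binom{j-1}{r-1}$ grows factorially in $j$. Writing $d:=j-1-r\ge 1$, so that $2^{k(1-j+r)}=2^{-kd}$, and estimating $\tfrac{(j-1)!}{(j-d)!}\le(2^{k/2})^{d-1}$ and $\binom{j-1}{d+1}\le(2^{k/2})^{d+1}/(d+1)!$ through $j-1\le 2^{k/2}$, the $k$-dependence cancels exactly and each off-diagonal term is bounded by $2^{\,d-j+3}/(d+1)!$. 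Summing over $j\ge d+2$ geometrically and then over $d\ge 1$ gives $4\sum_{d\ge 1}1/(d+1)!=4(e-2)$. Thus $W\le 4+4(e-2)=4e-4<8$, completing the proof.
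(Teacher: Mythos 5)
Your proof is correct. Its first half---applying Lemma~\ref{lemma ikt2} at each $i$ and summing via $\sum_{i=0}^{s-1}\binom{i}{r}=\binom{s}{r+1}$---is exactly the paper's reduction, and both proofs then face the same key estimate
\begin{equation*}
\sum_{j=2}^{t-1}\frac{(j-1)!}{2^{jk}}\sum_{r=1}^{j-1}\binom{j-1}{r-1}\binom{s}{r+1}\;\le\;8\binom{s}{2}\frac{1}{2^{2k}},
\end{equation*}
but you execute it differently. The paper swaps the order of summation and, for fixed $r$, runs a ratio test in $j$: the quotient of consecutive terms is $\frac{j}{2^k}\cdot\frac{j}{j-r+1}\le\frac{(t-2)^2}{2\cdot 2^k}\le\frac{1}{2}$ (this is where $t\le 2^{k/2}+2$ enters), so the inner sum over $j$ is at most twice its leading term $r!\,r/2^{(r+1)k}$; it then bounds $r!\,r\binom{s}{r+1}\le s(s-1)^r$ and uses $s\le 2^{k-1}$ in a final geometric sum over $r$. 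You instead spend $s\le 2^{k-1}$ first, extracting the factor $\binom{s}{2}2^{-2k}$ termwise and reducing everything to the purely numerical inequality $W\le 8$; the hypothesis $t\le 2^{k/2}+2$ then enters through your diagonal/off-diagonal split, where the off-diagonal is tamed by $\sum_{d\ge 1}1/(d+1)!=e-2$, giving $W\le 4e-4<8$. The two arguments use the two hypotheses in the same roles, and your split at $r=j-1$ isolates the same dominant terms as the paper's ratio test (whose leading term $j=r+1$ is precisely your diagonal); still, your decomposition is genuinely different in execution. What it buys: a clean separation of the $s$-dependence from the $(j,r)$-combinatorics and a slightly sharper constant ($4e-4\approx 6.87$ in place of $8$); what it costs: a case split and somewhat heavier bookkeeping than the paper's single chain of inequalities.
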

\begin{proof}
By summing up \eqref{eq:ikt2} for $0\leq i\leq s-1$ and using the identity
$ \sum_{i=0}^{s-1}\binom{i}{j}=\binom{s}{j+1}$ we get that
\begin{multline*}0\leq-\ln\prod_{i=0}^{s-1}\left(1-\frac{i}{2^k}\right)-\sum_{j=1}^{t-1}\frac{(j-1)!}{2^{jk}}\binom{s}{j+1}\leq\\
\leq\sum_{j=1}^{t-1}\frac{(j-1)!}{2^{jk}}\sum_{r=1}^{j-1}\binom{j-1}{r-1}\binom{s}{r+1}+\frac{2^t(t -1)!}{2^{tk}}\sum_{r=1}^{t}\binom{t-1}{r-1}\binom{s}{r+1}.
\end{multline*}
For every $r,j$ such that $2\leq r+1\leq  j< t-1$,
$$\frac{j!}{2^{(j+1)k}}\binom{j}{r-1} \bigg/ \frac{(j-1)!}{2^{jk}}\binom{j-1}{r-1}=\frac{j}{2^k}\cdot\frac{j}{j-(r-1)}\leq\frac{(t-2)^2}{2^k\cdot 2}\leq\frac{1}{2},$$
hence, for every $1\leq r\leq t-2$,
$$\sum_{j=r+1}^{t-1}\frac{(j-1)!}{2^{jk}}\binom{j-1}{r-1}<2\cdot\frac{r!\cdot r}{2^{rk}}.$$
Therefore,
\begin{multline*}\sum_{j=1}^{t-1}\frac{(j-1)!}{2^{jk}}\sum_{r=1}^{j-1}\binom{j-1}{r-1}\binom{s}{r+1}=\sum_{r=1}^{t-2}\left(\sum_{j=r+1}^{t-1}\frac{(j-1)!}{2^{jk}}\binom{j-1}{r-1}\right)\binom{s}{r+1}<\\
<\sum_{r=1}^{t-2}2\frac{r!\cdot r}{2^{(r+1)k}}\binom{s}{r+1}\leq\sum_{r=1}^{t-2}2\frac{s(s-1)^r}{2^{(r+1)k}}=4\binom{s}{2}\frac{1}{2^{2k}}\sum_{r=1}^{t-2}\left(\frac{s-1}{2^k}\right)^{r-1}<8\binom{s}{2}\frac{1}{2^{2k}},
\end{multline*}
and the lemma follows.
\end{proof}
This leads to the following lemma, which is a generalization of Lemma \ref{ln}.
\begin{lemma}\label{ln2}
If $q\leq 2^{n-1}$, $2\leq t\leq 2^{m/2}+2$, then for every $\omega\in\Omega$ for which $\col_2(\omega)\leq\binom{2^{m-1}}{2}$,
\begin{multline*}0\leq-\ln\frac{\pf(\omega)}{\pF(\omega)}-\ln\prod_{i=0}^{q-1}\left(1-\frac{i}{2^n}\right)-\sum_{j=1}^{t-1}\frac{(j-1)!}{2^{jm}}\col_{j+1}(\omega)\leq\\
\leq 8\frac{\col_2(\omega)}{2^{2m}}+\frac{2^t(t -1)!}{2^{tm}}\sum_{j=1}^{t}\binom{t-1}{j-1}\col_{j+1}(\omega).\end{multline*}
\end{lemma}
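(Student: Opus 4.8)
The plan is to follow the proof of Lemma~\ref{ln} line for line, replacing its single use of Lemma~\ref{lemma skt} by the sharper $t$-term expansion supplied by Lemma~\ref{lemma skt2}. First I would reuse the multiplicity decomposition already computed there: writing $d_1,\dots,d_\ell$ for the multiplicities of the distinct vectors of $\{0,1\}^{n-m}$ occurring in $\omega$ (so that $\sum_{k}d_k=q$), the explicit expressions for $\pf(\omega)$ and $\pF(\omega)$ give
\begin{equation*}
\ln\frac{\pf(\omega)}{\pF(\omega)}+\ln\prod_{i=0}^{q-1}\Bigl(1-\frac{i}{2^n}\Bigr)=\sum_{k=1}^{\ell}\ln\prod_{i=0}^{d_k-1}\Bigl(1-\frac{i}{2^m}\Bigr).
\end{equation*}
This identity holds verbatim and is independent of $t$.

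Next I would verify that Lemma~\ref{lemma skt2} may be applied to each inner product with $s=d_k$ and $k=m$. Its hypotheses read $d_k\le 2^{m-1}$ and $2\le t\le 2^{m/2}+2$; the latter is precisely the standing assumption of the present lemma, and the former follows, exactly as in the proof of Lemma~\ref{ln}, from $\sum_{k}\binom{d_k}{2}=\col_2(\omega)\le\binom{2^{m-1}}{2}$, which forces each single $d_k\le 2^{m-1}$. Applying Lemma~\ref{lemma skt2} to each factor then gives, for every $k$,
\begin{equation*}
0\le-\ln\prod_{i=0}^{d_k-1}\Bigl(1-\frac{i}{2^m}\Bigr)-\sum_{j=1}^{t-1}\frac{(j-1)!}{2^{jm}}\binom{d_k}{j+1}\le 8\binom{d_k}{2}\frac{1}{2^{2m}}+\frac{2^t(t-1)!}{2^{tm}}\sum_{j=1}^{t}\binom{t-1}{j-1}\binom{d_k}{j+1}.
\end{equation*}

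Finally I would sum these two-sided inequalities over $1\le k\le\ell$ and convert every sum of binomials of the $d_k$ into a collision count. The only combinatorial input is the identity $\sum_{k=1}^{\ell}\binom{d_k}{r}=\col_r(\omega)$, valid for each $r\ge 2$ because a value occurring $d_k$ times contributes exactly $\binom{d_k}{r}$ to the $r$-fold coincidences; taking $r=j+1$ sends $\sum_k\binom{d_k}{j+1}$ to $\col_{j+1}(\omega)$ and $\sum_k\binom{d_k}{2}$ to $\col_2(\omega)$. Combining the summed inequalities with the displayed identity yields the asserted bound. I do not anticipate a genuine obstacle here, since the analytic work has been front-loaded into Lemma~\ref{lemma skt2}; the only points demanding care are the justification that each $d_k\le 2^{m-1}$ (so that Lemma~\ref{lemma skt2} is legitimately invoked) and the bookkeeping of the index shift $r=j+1$ when passing from the multiplicities to the collision counts.
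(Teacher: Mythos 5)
Your proposal is correct and takes essentially the same route as the paper's own proof: both use the multiplicity decomposition together with the identity \eqref{eq:pr_pr}, apply Lemma~\ref{lemma skt2} to each factor with $s=d_k$ and exponent $m$ (justified by $d_k\leq 2^{m-1}$, which follows from $\binom{d_k}{2}\leq\col_2(\omega)\leq\binom{2^{m-1}}{2}$), and then sum over $k$ using $\sum_{k}\binom{d_k}{j+1}=\col_{j+1}(\omega)$. There is no gap.
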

\begin{proof}
Suppose that in the q-tuple $\omega$ exactly $t$ distinct vectors in $\{0,1\}^{n-m}$
appear, with multiplicities $d_1,d_2,\ldots,d_l$ respectively.
For every $1\leq k\leq l$, note that $d_k\leq 2^{m-1}$, since $\sum_{k=1}^l\binom{d_k}{2}=\col_2(\omega)\leq\binom{2^{m-1}}{2}$, hence by Lemma \ref{lemma skt2}, 
\begin{equation*}0\leq-\ln\prod_{i=0}^{d_k-1}\left(1-\frac{i}{2^m}\right)-\sum_{j=1}^{t-1}\frac{(j-1)!}{2^{jm}}\binom{d_k}{j+1}\leq 8\binom{d_k}{2}\frac{1}{2^{2m}}+\frac{2^t(t -1)!}{2^{tm}}\sum_{j=1}^{t}\binom{t-1}{j-1}\binom{d_k}{j+1}.
\end{equation*}
Summing up on  $1\leq k\leq l$, we get
\begin{multline*}0\leq-\sum_{k=1}^l\ln\prod_{i=0}^{d_k-1}\left(1-\frac{i}{2^m}\right)-\sum_{j=1}^{t-1}\frac{(j-1)!}{2^{jm}}\sum_{k=1}^l\binom{d_k}{j+1}\leq\\
\leq\frac{8}{2^{2m}}\sum_{k=1}^l\binom{d_k}{2}+\frac{2^t(t -1)!}{2^{tm}}\sum_{j=1}^{t}\binom{t-1}{j-1}\sum_{k=1}^l\binom{d_k}{j+1},
\end{multline*}
and the claim follows by \eqref{eq:pr_pr}.
\end{proof}
Finally, we also need the following technical lemma. 
\begin{lemma}\label{tech}For every integers $n,m,q$ and $t$
$$\frac{2^t(t-1)!}{2^{tm}}\sum_{j=1}^{t-1}\binom{t-1}{j-1}\binom{q}{j+1}\frac{1}{2^{j(n-m)}}\leq 4\left(\frac{q}{2^{\frac{n+m}{2}}}\right)^2\left(\frac{2(t-1)}{2^m}+\frac{2q}{2^n}\right)^{t-2}.$$
\end{lemma}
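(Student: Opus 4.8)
The plan is to prove the inequality \emph{term by term} after expanding the right-hand side with the binomial theorem. Write $A=q/2^{\frac{n+m}{2}}$, $B=2(t-1)/2^{m}$ and $C=2q/2^{n}$, so that the target right-hand side is
\[
4A^{2}(B+C)^{t-2}=\sum_{r=0}^{t-2}4A^{2}\binom{t-2}{r}B^{t-2-r}C^{r}.
\]
This expansion has exactly $t-1$ summands, matching the $t-1$ terms of the left-hand sum, and I would pair the $j$-th left-hand term with the term $r=j-1$ of the expansion; the point is that both then carry the same power $q^{j+1}$ of $q$ (note $C^{j-1}$ contributes $q^{j-1}$ and $A^{2}$ contributes $q^{2}$).

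First I would replace $\binom{q}{j+1}$ by the elementary upper bound $q^{j+1}/(j+1)!$, valid for all integers $q\ge 0$. It then suffices to verify, for each $1\le j\le t-1$,
\[
\frac{2^{t}(t-1)!}{2^{tm}}\binom{t-1}{j-1}\frac{q^{j+1}}{(j+1)!}\frac{1}{2^{j(n-m)}}\ \le\ 4A^{2}\binom{t-2}{j-1}B^{t-1-j}C^{j-1}.
\]
Both sides equal a constant times $q^{j+1}$, so after cancelling $q^{j+1}$ the inequality reduces to a comparison of two explicit products of powers of $2$ and of combinatorial factors, using $A^{j+1}=q^{j+1}/2^{(j+1)(n+m)/2}$, $B^{t-1-j}=2^{t-1-j}(t-1)^{t-1-j}/2^{m(t-1-j)}$ and $C^{j-1}=2^{j-1}q^{j-1}/2^{n(j-1)}$.

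The routine but essential bookkeeping is the exponent arithmetic: a direct computation shows that the power of $2$ on the left exceeds, by exactly a factor $4$, the power of $2$ produced on the right once the literal factor $4$, the binomial coefficient $\binom{t-2}{j-1}$, the factor $(t-1)^{t-1-j}$ and $q^{j+1}$ are extracted. This factor $4$ cancels against the explicit $4$ coming from $4A^{2}$, and the term-by-term inequality collapses to the purely combinatorial statement
\[
\frac{(t-1)!\,\binom{t-1}{j-1}}{(j+1)!\,\binom{t-2}{j-1}\,(t-1)^{t-1-j}}\ \le\ 1 .
\]
Using the identity $\binom{t-1}{j-1}/\binom{t-2}{j-1}=(t-1)/(t-j)$ together with the bound $(t-1)!/(j+1)!=(j+2)(j+3)\cdots(t-1)\le(t-1)^{t-2-j}$ (a product of $t-2-j$ factors, each at most $t-1$), the left side is at most $1/(t-j)\le 1$ for $1\le j\le t-2$; the boundary case $j=t-1$ is checked directly and gives $(t-1)/t<1$. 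Summing the $t-1$ verified term inequalities reassembles $4A^{2}(B+C)^{t-2}$ on the right and yields the lemma.

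I expect the only real friction to be this exponent arithmetic establishing the clean factor-of-$4$ discrepancy between the two powers of two; everything else is the short combinatorial estimate above. I would also keep the implicit hypotheses $t\ge 2$ and $q\ge 0$ in view, so that the binomial expansion is meaningful and the inequality $t-j\ge 1$ is legitimate throughout.
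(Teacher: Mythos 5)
Your proof is correct and follows essentially the same route as the paper's: both replace $\binom{q}{j+1}$ by $q^{j+1}/(j+1)!$, bound the combinatorial ratios by powers of $t-1$ (your identities $\binom{t-1}{j-1}/\binom{t-2}{j-1}=(t-1)/(t-j)$ and $(t-1)!/(j+1)!\le(t-1)^{t-2-j}$ are exactly the paper's estimates $\binom{t-1}{j-1}\le(t-1)\binom{t-2}{j-1}$ and $(t-1)(t-1)!/(j+1)!\le(t-1)^{t-j-1}$), and reassemble everything via the binomial theorem with the same power-of-two bookkeeping. The only difference is presentational: the paper builds a chain of inequalities on the left-hand side and invokes the binomial theorem at the very end, whereas you expand the right-hand side first and verify the matching term-by-term inequalities.
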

\begin{proof} Note that
\begin{align*}(t-1)!&\sum_{j=1}^{t-1}\binom{t-1}{j-1}\binom{q}{j+1}\frac{1}{2^{j(n-m)}}\leq(t-1)!\sum_{j=1}^{t-1} (t-1)\binom{t-2}{j-1}\frac{q^{j+1}}{(j+1)!}\frac{1}{2^{j(n-m)}}=\\
&=\frac{q^2}{2^{n-m}}\sum_{j=1}^{t-1}\binom{t-2}{j-1}\frac{(t-1)(t-1)!}{(j+1)!}\left(\frac{q}{2^{n-m}}\right)^{j-1}\leq\\
&\leq\frac{q^2}{2^{n-m}}\sum_{j=1}^{t-1}\binom{t-2}{j-1}(t-1)^{t-j-1}\left(\frac{q}{2^{n-m}}\right)^{j-1}=\frac{q^2}{2^{n-m}}\left((t-1)+\frac{q}{2^{n-m}}\right)^{t-2},
\end{align*}
hence
\begin{multline*}\frac{2^t(t-1)!}{2^{tm}}\sum_{j=1}^{t-1}\binom{t-1}{j-1}\binom{q}{j+1}\frac{1}{2^{j(n-m)}}\leq\frac{2^t}{2^{tm}}\cdot\frac{q^2}{2^{n-m}}\left((t-1)+\frac{q}{2^{n-m}}\right)^{t-2}=\\
=4\left(\frac{q}{2^{\frac{n+m}{2}}}\right)^2\left(\frac{2(t-1)}{2^m}+\frac{2q}{2^n}\right)^{t-2}.\qedhere\end{multline*}
\end{proof}
\begin{proof}[Proof of Proposition~\ref{proposition2}]
For every $\omega\in S$, by \eqref{eq:assumption_2_1},
$$\col_2(\omega)\leq\binom{q}{2}\frac{1}{2^{n-m}}+\alpha_1\leq\binom{2^{m-1}}{2},$$ 
hence by Lemma \ref{ln2} and Lemma \ref{lemma skt2},
\begin{align}
-\ln\frac{\pf(\omega)}{\pF(\omega)}\leq 8\frac{\col_2(\omega)}{2^{2m}}+\frac{2^t(t -1)!}{2^{tm}}\sum_{j=1}^{t}&\binom{t-1}{j-1}\col_{j+1}(\omega)+\nonumber\\
&+\sum_{j=1}^{t-1}\frac{(j-1)!}{2^{jm}}\left(\col_{j+1}(\omega)-\binom{q}{j+1}\frac{1}{2^{j(n-m)}}\right),\label{-2}\\
\ln\frac{\pf(\omega)}{\pF(\omega)}\leq  8\binom{q}{2}\frac{1}{2^{2n}}+\frac{2^t(t-1)!}{2^{tn}}\sum_{j=1}^{t}&\binom{t-1}{j-1}\binom{q}{j+1}-\nonumber\\
&-\sum_{j=1}^{t-1}\frac{(j-1)!}{2^{jm}}\left(\col_{j+1}(\omega)-\binom{q}{j+1}\frac{1}{2^{j(n-m)}}\right).\label{+2}
\end{align}
By \eqref{eq:ln1}, \eqref{-2} and the definition of $S$,
\begin{align}1-\frac{\pf(\omega)}{\pF(\omega)}\leq&-\ln\frac{\pf(\omega)}{\pF(\omega)}\leq\nonumber\\
\leq&\frac{8}{2^{2m}}\left(\binom{q}{2}\frac{1}{2^{n-m}}+\alpha_1\right)+\nonumber\\
&+\frac{2^t(t -1)!}{2^{tm}}\left(\sum_{j=1}^{t-1}\binom{t-1}{j-1}\left(\binom{q}{j+1}\frac{1}{2^{j(n-m)}}+\alpha_j\right)+\beta\right)+\sum_{j=1}^{t-1}\frac{(j-1)!}{2^{jm}}\alpha_j=\nonumber\\
=&8\binom{q}{2}\frac{1}{2^{n+m}}+\frac{2^t(t-1)!}{2^{tm}}\sum_{j=1}^{t-1}\binom{t-1}{j-1}\binom{q}{j+1}\frac{1}{2^{j(n-m)}}+\nonumber\\
&+8\frac{\alpha_1}{2^{2m}}+\left(\sum_{j=1}^{t-1}\left(\frac{(j-1)!}{2^{jm}}+\frac{2^t(t-1)!}{2^{tm}}\binom{t-1}{j-1}\right)\alpha_j\right)+\frac{2^t(t-1)!}{2^{tm}}\beta\leq\nonumber\\
\leq& 4\left(\frac{q}{2^{\frac{n+m}{2}}}\right)^2+4\left(\frac{q}{2^{\frac{n+m}{2}}}\right)^2\left(\frac{2(t-1)}{2^m}+\frac{2q}{2^n}\right)^{t-2}+\nonumber\\
&+\left(2+\frac{8}{2^m}\right)\frac{\alpha_1}{2^m}+\left(\sum_{j=2}^{t-1}\left(1+2^j\right)\frac{(j-1)!}{2^{jm}}\alpha_j\right)+\frac{2^t(t -1)!}{2^{tm}}\beta,\label{upper2}
\end{align}
where on the last step we used Lemma \ref{tech} and the fact that for every $1\leq j\leq t-1$, since $(t-1)^2\leq 2^{m-1}$,
\begin{multline*}\frac{2^t(t-1)!}{2^{tm}}\binom{t-1}{j-1}\leq\frac{2^t(t-1)!}{2^{tm}}\cdot\frac{(t-1)!}{(j-1)!}=\frac{2^t(j-1)!}{2^{tm}}\left(\frac{(t-1)!}{(j-1)!}\right)^2\leq\\
\leq\frac{2^t(j-1)!}{2^{tm}}(t-1)^{2(t-j)}=2^j\frac{(j-1)!}{2^{jm}}\left(\frac{(t-1)^2}{2^{m-1}}\right)^{t-j}\leq 2^j\frac{(j-1)!}{2^{jm}}.
\end{multline*}
On the other hand, by \eqref{+2} and the definition of $S$,
\begin{equation}\label{eq:upper_ln2}\ln\frac{\pf(\omega)}{\pF(\omega)}\leq 8\binom{q}{2}\frac{1}{2^{2n}}+\frac{2^t(t-1)!}{2^{tn}}\sum_{j=1}^{t}\binom{t-1}{j-1}\binom{q}{j+1}+\sum_{j=1}^{t-1}\frac{(j-1)!}{2^{jm}}\alpha_j.
\end{equation}
In particular, by \eqref{eq:assumption_2_2}
\begin{multline*}\ln\frac{\pf(\omega)}{\pF(\omega)}\leq 8\binom{q}{2}\frac{1}{2^{2n}}+\frac{2^t(t-1)!}{2^{tn}}\binom{q+t-1}{t+1}+\sum_{j=1}^{t-1}\frac{(j-1)!}{2^{jm}}\alpha_j\leq\\
\leq 4\left(\frac{q}{2^n}\right)^2+\frac{1}{2t(t+1)2^{\frac{n-m}{2}(t-\frac{n+m}{n+m})}}\left(\frac{2(q+t-1)}{2^{\frac{n+m}{2}}}\right)^{t+1}+\sum_{j=1}^{t-1}\frac{(j-1)!}{2^{jm}}\alpha_j\leq\frac{1}{2},
\end{multline*}
hence $\pf(\omega)/\pF(\omega)\leq\sqrt{e}<2$. Therefore, if $\pf(\omega)/\pF(\omega)\geq 1$ then by \eqref{eq:ln2}, \eqref{eq:upper_ln2} and \eqref{eq:assumption_2_3},
\begin{align*}\frac{\pf(\omega)}{\pF(\omega)}-1\leq& 2\left(8\binom{q}{2}\frac{1}{2^{2n}}+\frac{2^t(t-1)!}{2^{tn}}\sum_{j=1}^{t}\binom{t-1}{j-1}\binom{q}{j+1}+\sum_{j=1}^{t-1}\frac{(j-1)!}{2^{jm}}\alpha_j\right)\leq\\
\leq& 16\frac{q^2}{2^{2n}}+2\frac{2^t(t-1)!}{2^{tn}}\sum_{j=1}^{t-1}\binom{t-1}{j-1}\binom{q}{j+1}2^{(t-j-1)(n-m)}+\\
&+2\frac{2^t(t-1)!}{2^{tn}}2^{t(n-m)}\beta+2\sum_{j=1}^{t-1}\frac{(j-1)!}{2^{jm}}\alpha_j=\\
=&\frac{4}{2^{n-m-2}}\left(\frac{q}{2^{\frac{n+m}{2}}}\right)^2+\frac{2}{2^{n-m}}\left(\frac{2^t(t-1)!}{2^{tm}}\sum_{j=1}^{t-1}\binom{t-1}{j-1}\binom{q}{j+1}\frac{1}{2^{j(n-m)}}\right)+\\
&+\sum_{j=1}^{t-1}2\frac{(j-1)!}{2^{jm}}\alpha_j+\frac{2^t(t -1)!}{2^{tm}}\beta,
\end{align*}
and we are done by Lemma \ref{tech}.
\end{proof}

\subsection{Derivation of Theorem \ref{friendly_theorem2}}
\label{subsec:combine}
The following lemma is a generalization of Lemma \ref{chebyshev}.
\begin{lemma}
\label{chebyshev2}
\begin{equation*}\pF (\bar{S})\leq\sum_{j=1}^{t-1}\binom{j+1}{2}\binom{q}{j+1}\frac{1}{2^{j(n-m)}}\left(1+\frac{q}{2^{n-m}}\right)^{j-1}\frac{1}{{\alpha_j}^2}+\binom{q}{t+1}\frac{1}{2^{t(n-m)}\beta}.
\end{equation*}
\end{lemma}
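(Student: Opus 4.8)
The plan is to follow exactly the route of Lemma~\ref{chebyshev}, now carrying $t-1$ two-sided collision constraints instead of a single one. First I would observe that $\omega\notin S$ precisely when at least one of the defining constraints of $S$ fails. Since $S$ asks that each count $\col_{j+1}$ (for $1\le j\le t-1$) stay within $\alpha_j$ of its mean $\binom{q}{j+1}2^{-j(n-m)}$ and that $\col_{t+1}\le\beta$, the complement decomposes as a union:
\[
\bar S=\bigcup_{j=1}^{t-1}\left\{\omega\in\Omega:\left\lvert\col_{j+1}(\omega)-\binom{q}{j+1}\frac{1}{2^{j(n-m)}}\right\rvert>\alpha_j\right\}\cup\left\{\omega\in\Omega:\col_{t+1}(\omega)>\beta\right\}.
\]
Here I use the exact mean $\binom{q}{j+1}2^{-j(n-m)}=\E_{f}\col_{j+1}$ computed in Lemma~\ref{collisions}, so each two-sided event genuinely records a deviation of $\col_{j+1}$ from its own expectation. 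Applying the union bound then gives
\[
\pF(\bar S)\leq\sum_{j=1}^{t-1}\pF\left(\left\lvert\col_{j+1}-\E_{f}\col_{j+1}\right\rvert>\alpha_j\right)+\pF\left(\col_{t+1}>\beta\right).
\]

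Next I would bound each summand by a concentration inequality, as in the proof of Lemma~\ref{chebyshev}. For $1\le j\le t-1$, Chebyshev's inequality applied to $\col_{j+1}$ yields $\pF(|\col_{j+1}-\E_{f}\col_{j+1}|>\alpha_j)\le \Var_{f}\col_{j+1}/\alpha_j^2$, and Markov's inequality applied to the tail event yields $\pF(\col_{t+1}>\beta)\le \E_{f}\col_{t+1}/\beta$. The final step is to substitute the moment estimates of Lemma~\ref{collisions}, read off at index $j+1$ (respectively $t+1$): the variance bound becomes $\binom{j+1}{2}\binom{q}{j+1}2^{-j(n-m)}(1+q/2^{n-m})^{j-1}$, while $\E_{f}\col_{t+1}=\binom{q}{t+1}2^{-t(n-m)}$. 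Plugging these in produces exactly the claimed inequality.

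The argument is routine once the bookkeeping is fixed; the only point needing care is the index shift. The quantities controlled by $S$ are $\col_2,\ldots,\col_t$ together with $\col_{t+1}$, so $\alpha_j$ must be matched to $\col_{j+1}$ rather than to $\col_j$, and correspondingly Lemma~\ref{collisions} must be evaluated at $j+1$, turning its exponents $(j-1)(n-m)$ and $j-2$ into $j(n-m)$ and $j-1$. I expect no analytic obstacle here: unlike Proposition~\ref{proposition2}, this lemma requires no logarithmic expansions or summation estimates, only the union bound together with the Chebyshev and Markov inequalities applied to the first two moments already supplied by Lemma~\ref{collisions}.
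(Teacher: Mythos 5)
Your proposal is correct and matches the paper's proof essentially verbatim: decompose $\bar S$ by the union bound, apply Chebyshev's inequality to each event $\lvert\col_{j+1}-\E_{f}\col_{j+1}\rvert>\alpha_j$ and Markov's inequality to $\col_{t+1}>\beta$, then substitute the moment bounds of Lemma~\ref{collisions} at indices $j+1$ and $t+1$. Your remark on the index shift (matching $\alpha_j$ to $\col_{j+1}$) is exactly the right reading of the definition of $S$ and is the convention the paper's own proof uses.
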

\begin{proof}
By Chebyshev inequality, for every $1\leq j\leq  t-1$,
$$\pF\left(\left\{\omega\in\Omega:\left\lvert \col_{j+1}(\omega)-\binom{q}{j+1}\frac{1}{2^{j(n-m)}}\right\rvert>\alpha_j \right\}\right)\leq\frac{\Var_{f} \col_{j+1}}{{\alpha_j}^2},$$
and by Markov inequality,
$$\pF\left(\left\{\omega\in\Omega: \col_{t+1}(\omega)>\beta\right\}\right)\leq\frac{\E_{f} \col_{t+1}}{\beta}.$$
Using the union bound, we conclude that
$$\pF(\bar{S})\leq\left(\sum_{j=1}^{t-1}\frac{\Var_{f} \col_{j+1}}{{\alpha_j}^2}\right)+\frac{\E_{f} \col_{t+1}}{\beta},$$
and the claim follows by Lemma \ref{collisions}.
\end{proof}

Combining Lemma~\ref{advantage}, Lemma~\ref{chebyshev2} and Proposition~\ref{proposition2}, we get the following generalization of Lemma \ref{S to Adv}.

\begin{lemma}
\label{S to Adv2}
Suppose that $m\leq n-2$, $q\leq 2^{n-1}$, $2\leq t\leq 2^{(m-1)/2}+1$,
\begin{align*}
&\binom{q}{2}\frac{1}{2^{n-m}}+\alpha_1\leq\binom{2^{m-1}}{2},\\
&4\left(\frac{q}{2^n}\right)^2+\frac{1}{2t(t+1)2^{\frac{n-m}{2}(t-\frac{n+m}{n+m})}}\left(\frac{2(q+t-1)}{2^{\frac{n+m}{2}}}\right)^{t+1}+\sum_{j=1}^{t-1}\frac{(j-1)!}{2^{jm}}\alpha_j\leq\frac{1}{2},\\
&\beta\geq 2\binom{q}{t+1}\frac{1}{2^{t(n-m)}}.
\end{align*}
Then
\begin{multline}\label{lem14}Adv\leq 4\left(1+\left(\frac{2(t-1)}{2^m}+\frac{2q}{2^n}\right)^{t-2}\right)\left(\frac{q}{2^{\frac{n+m}{2}}}\right)^2+\left(2\left(1+\frac{4}{2^m}\right)\frac{\alpha_1}{2^m}+\frac{1}{2}\left(\frac{q}{2^{\frac{n+m}{2}}}\right)^2\left(\frac{2^m}{\alpha_1}\right)^2\right)+\\
+\sum_{j=2}^{t-1}\left(\left(1+2^j\right)\frac{(j-1)!\alpha_j}{2^{jm}}+\frac{(j-1)!}{2\cdot 2^{(j-1)(n-m)}}\left(\frac{q}{2^{\frac{n+m}{2}}}\right)^{j+1}\left(\frac{1}{2^{\frac{3m-n}{2}}}+\frac{q}{2^{\frac{n+m}{2}}}\right)^{j-1}\left(\frac{2^{jm}}{(j-1)!\alpha_j}\right)^2\right)+\\
+\left(\frac{2^t(t -1)!}{2^{tm}}\beta+\frac{q^{t+1}}{(t+1)!}\cdot\frac{1}{2^{t(n-m)}\beta}\right).\end{multline}
\end{lemma}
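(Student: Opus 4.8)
The plan is to assemble \eqref{lem14} directly from the three results already in hand, exploiting that the hypotheses of this lemma are verbatim those of Proposition \ref{proposition2}. First I would apply Lemma \ref{advantage} to the set $S$ defined above, obtaining
\[
Adv\leq\max_{\omega\in S}\left\lvert\frac{\pf(\omega)}{\pF(\omega)}-1\right\rvert+\pF(\bar{S}).
\]
The first summand is controlled by Proposition \ref{proposition2} and the second by Lemma \ref{chebyshev2}; the whole task then reduces to showing that the sum of those two upper bounds is at most the right-hand side of \eqref{lem14}.

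Every term produced by Proposition \ref{proposition2} already occurs verbatim in \eqref{lem14}: the leading $4\bigl(1+(2(t-1)/2^m+2q/2^n)^{t-2}\bigr)(q/2^{(n+m)/2})^2$, the $\alpha_1$-term $2(1+4/2^m)\alpha_1/2^m$, the sum $\sum_{j=2}^{t-1}(1+2^j)(j-1)!\,\alpha_j/2^{jm}$, and the $\beta$-term $2^t(t-1)!\,\beta/2^{tm}$. So no work is required there beyond transcription, and the substance of the argument lies entirely in matching, term by term, the bound on $\pF(\bar{S})$ from Lemma \ref{chebyshev2} against the remaining ``parameter-reciprocal'' pieces of \eqref{lem14}.

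I would split $\pF(\bar{S})$ by the index $j$. The $j=1$ variance term is $\binom{q}{2}2^{-(n-m)}\alpha_1^{-2}$, which by $\binom{q}{2}\leq q^2/2$ is at most $\tfrac12(q/2^{(n+m)/2})^2(2^m/\alpha_1)^2$, exactly the reciprocal piece paired with the $\alpha_1$-term. The Markov tail term $\binom{q}{t+1}2^{-t(n-m)}/\beta$ is handled by $\binom{q}{t+1}\leq q^{t+1}/(t+1)!$, yielding the last summand $\tfrac{q^{t+1}}{(t+1)!}2^{-t(n-m)}/\beta$. The genuinely fiddly step, and the one I expect to be the main obstacle, is the generic variance term $\binom{j+1}{2}\binom{q}{j+1}2^{-j(n-m)}(1+q/2^{n-m})^{j-1}\alpha_j^{-2}$ for $2\leq j\leq t-1$.

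For this term I would first apply $\binom{j+1}{2}\binom{q}{j+1}\leq q^{j+1}/(2(j-1)!)$ (using $\binom{q}{j+1}\leq q^{j+1}/(j+1)!$). What remains is purely a bookkeeping of powers of $2$: the factorizations $1+q/2^{n-m}=2^{-(n-m)}(2^{n-m}+q)$ and $1/2^{(3m-n)/2}+q/2^{(n+m)/2}=2^{-(n+m)/2}(2^{n-m}+q)$ reveal that the two ``$(\cdot)^{j-1}$'' factors appearing in Lemma \ref{chebyshev2} and in \eqref{lem14} share the common factor $(2^{n-m}+q)^{j-1}$. After cancelling this together with the common factor $q^{j+1}/(2(j-1)!\,\alpha_j^2)$, matching the two expressions reduces to a single exponent identity; a direct computation shows the residual power of $2$ equals $2^{-(2j-1)(n-m)}$ on each side. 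Hence the $j$-th variance term is bounded by precisely the $j$-th summand $\frac{(j-1)!}{2\cdot 2^{(j-1)(n-m)}}(q/2^{(n+m)/2})^{j+1}\bigl(1/2^{(3m-n)/2}+q/2^{(n+m)/2}\bigr)^{j-1}(2^{jm}/((j-1)!\alpha_j))^2$ of \eqref{lem14}. Summing these matched estimates over $j$ and adding the transcribed Proposition \ref{proposition2} terms gives \eqref{lem14}.
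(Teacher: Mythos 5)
Your proposal is correct and follows exactly the paper's (implicit) proof: the paper derives Lemma \ref{S to Adv2} precisely by combining Lemma \ref{advantage} applied to $S$, Proposition \ref{proposition2} for the $\max_{\omega\in S}$ term, and Lemma \ref{chebyshev2} for $\pF(\bar{S})$, with the same elementary binomial estimates $\binom{j+1}{2}\binom{q}{j+1}\leq q^{j+1}/(2(j-1)!)$ and $\binom{q}{t+1}\leq q^{t+1}/(t+1)!$ and the same power-of-two bookkeeping via $1+q/2^{n-m}=(2^{n-m}+q)/2^{n-m}$ and $2^{-\frac{3m-n}{2}}+q\,2^{-\frac{n+m}{2}}=(2^{n-m}+q)/2^{\frac{n+m}{2}}$ that you carried out (and your exponent check $2^{-(2j-1)(n-m)}$ on both sides is right).
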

Taking  $\alpha_1,\alpha_2,\ldots,\alpha_{t-1},\beta$ to minimize the right hand side of \eqref{lem14} we get the following.

\begin{lemma}
\label{lemma92}
Assume that $m\leq n-2$, $q\leq 2^{n-1}$, $2\leq t\leq 2^{(m-1)/2}+1$,
\begin{align*}
&\frac{1}{2^{\frac{n-m}{2}(t-\frac{n+m}{n-m})}}\left(\frac{2q}{2^{\frac{n+m}{2}}}\right)^{t+1}\leq\frac{t(t+1)}{4},\\
&\left(\frac{q}{2^{\frac{n+m}{2}}}\right)^2+\frac{1}{\sqrt[3]{1+\frac{4}{2^m}}}\left(\frac{q}{2^{\frac{n+m}{2}}}\right)^{2/3}\frac{1}{2^m}\leq\frac{1}{2}\left(\frac{1}{2}-\frac{1}{2^m}\right),\\
&\frac{4(t-2)}{2^{n-m}}\left(\frac{1}{2^{\frac{3m-n}{2}}}+\frac{q}{2^{\frac{n+m}{2}}}\right)\frac{q}{2^{\frac{n+m}{2}}}\leq\frac{1}{8},
\end{align*}
and
\begin{multline*}
4\left(\frac{q}{2^n}\right)^2+\frac{1}{2t(t+1)2^{\frac{n-m}{2}(t-\frac{n+m}{n+m})}}\left(\frac{2(q+t-1)}{2^{\frac{n+m}{2}}}\right)^{t+1}+\\
+\frac{1}{2\sqrt[3]{1+\frac{4}{2^m}}}\left(\frac{q}{2^{\frac{n+m}{2}}}\right)^{2/3}+\left(\frac{2}{5\cdot 2^{n-m}}\left(\frac{1}{2^{\frac{3m-n}{2}}}+\frac{q}{2^{\frac{n+m}{2}}}\right)\right)^{\frac{1}{3}}\frac{q}{2^{\frac{n+m}{2}}}\leq\frac{1}{2}.
\end{multline*}
Then
\begin{multline}\label{eq:adv2_final}
Adv\leq  4\left(1+\left(\frac{2(t-1)}{2^m}+\frac{2q}{2^n}\right)^{t-2}\right)\left(\frac{q}{2^{\frac{n+m}{2}}}\right)^2
+2\sqrt[3]{2}\left(1+\frac{4}{2^m}\right)^{2/3}\left(\frac{q}{2^{\frac{n+m}{2}}}\right)^{2/3}+\\
+2\sqrt[3]{100}\left(\frac{1}{2^{n-m}}\left(\frac{1}{2^{\frac{3m-n}{2}}}+\frac{q}{2^{\frac{n+m}{2}}}\right)\right)^{\frac{1}{3}}\frac{q}{2^{\frac{n+m}{2}}}+\frac{\sqrt{2}}{\sqrt{t(t+1)}}\cdot\frac{1}{2^{\frac{n-m}{4}\left(t-\frac{n+m}{n-m}\right)}}\left(\frac{2q}{2^{\frac{n+m}{2}}}\right)^{\frac{t+1}{2}}.
\end{multline}
\end{lemma}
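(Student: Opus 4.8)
The plan is to start from the bound \eqref{lem14} of Lemma~\ref{S to Adv2} and choose the free parameters $\alpha_1,\dots,\alpha_{t-1},\beta$ so as to make its right-hand side as small as possible, and then to check that these choices are admissible, i.e.\ that they satisfy the three hypotheses \eqref{eq:assumption_2_1}, \eqref{eq:assumption_2_2}, \eqref{eq:assumption_2_3} required by Lemma~\ref{S to Adv2}. Writing $x:=q/2^{(n+m)/2}$ and $y:=2^{-(3m-n)/2}+x$, the right-hand side of \eqref{lem14} decomposes into the leading term $4\bigl(1+(\cdots)^{t-2}\bigr)x^2$, which is carried over verbatim into \eqref{eq:adv2_final}, plus one ``pair'' in each independent variable: a pair $A_j\alpha_j+B_j\alpha_j^{-2}$ for each $1\le j\le t-1$, and a pair $A_\beta\beta+B_\beta\beta^{-1}$. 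Since each parameter occurs in exactly one pair, I would optimize the pairs separately.

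For a pair $A\alpha+B\alpha^{-2}$ with $\alpha>0$, the balanced choice $\alpha=(B/A)^{1/3}$ makes both summands equal to $A^{2/3}B^{1/3}$, giving the value $2A^{2/3}B^{1/3}$; likewise $A_\beta\beta+B_\beta\beta^{-1}$ is minimized at $\beta=(B_\beta/A_\beta)^{1/2}$, with value $2(A_\beta B_\beta)^{1/2}$. Reading off the explicit $A_j,B_j,A_\beta,B_\beta$ from \eqref{lem14} and collecting the powers of $2$ and the factorials, the $j=1$ pair produces $2\sqrt[3]{2}\,(1+4/2^m)^{2/3}x^{2/3}$; the $\beta$ pair produces the last term of \eqref{eq:adv2_final}, where the cancellation $(t-1)!/(t+1)!=1/(t(t+1))$ and the collapse of the exponent of $2$ to $\tfrac{t+1}{2}-\tfrac{nt}{2}$ reproduce the factor $\sqrt{2}/\sqrt{t(t+1)}$ and the exponent $\tfrac{n-m}{4}\bigl(t-\tfrac{n+m}{n-m}\bigr)$; and each $j\ge2$ pair produces a term proportional to $\bigl(y/2^{n-m}\bigr)^{1/3}x$.

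The heart of the argument is to sum the $j\ge2$ contributions into the single closed-form term of \eqref{eq:adv2_final}. I would show that the ratio of consecutive optimized terms satisfies $\mathrm{term}_{j+1}/\mathrm{term}_{j}\le\bigl(4j\,xy/2^{n-m}\bigr)^{1/3}$, using $(1+2^{j+1})/(1+2^j)\le 2$ and $j!/(j-1)!=j$; for $j\le t-2$ this is at most $\bigl(4(t-2)xy/2^{n-m}\bigr)^{1/3}\le(1/8)^{1/3}=\tfrac12$, which is exactly what the third hypothesis of Lemma~\ref{lemma92} guarantees. Hence $\sum_{j\ge2}$ is at most twice its $j=2$ term. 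The $j=2$ term carries the constant $\sqrt[3]{100}=(4\cdot25)^{1/3}$ coming from $A_2,B_2$, and the geometric factor $2$ then yields the coefficient $2\sqrt[3]{100}$; Lemma~\ref{tech} (or the manipulation used there) is the natural tool for the underlying factorial estimates.

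It remains to verify admissibility. Condition \eqref{eq:assumption_2_3}, namely $\beta\ge 2\binom{q}{t+1}2^{-t(n-m)}$, follows from the first hypothesis of Lemma~\ref{lemma92}: squaring the chosen $\beta$ and using $\binom{q}{t+1}\le q^{t+1}/(t+1)!$ reduces it to $\frac{1}{2^{(n-m)(t-\frac{n+m}{n-m})/2}}\bigl(2q/2^{(n+m)/2}\bigr)^{t+1}\le t(t+1)/2$, which is weaker than that hypothesis. Condition \eqref{eq:assumption_2_1} follows from the second hypothesis: using $\binom{q}{2}2^{-(n-m)}\le 2^{2m-1}x^2$ and $\binom{2^{m-1}}{2}=2^{2m-3}(1-2^{1-m})$ and dividing by $2^{2m-1}$ produces exactly the right-hand side $\tfrac12(\tfrac12-2^{-m})$. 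Finally, condition \eqref{eq:assumption_2_2} is literally the fourth hypothesis once the sum $\sum_{j=1}^{t-1}\frac{(j-1)!}{2^{jm}}\alpha_j$ there is replaced by the bound from the previous two paragraphs, its $j=1$ part giving the $\tfrac{1}{2\sqrt[3]{1+4/2^m}}x^{2/3}$ summand and its $j\ge2$ part the last summand. The main obstacle I anticipate is precisely this bookkeeping: keeping the many constants aligned — the stray powers of $2^{1/3}$ produced by the cube-root optimizations, the factorial ratios, and the geometric-series factor — so that the four hypotheses of Lemma~\ref{lemma92} exactly cover the three conditions \eqref{eq:assumption_2_1}, \eqref{eq:assumption_2_2}, \eqref{eq:assumption_2_3} while simultaneously matching the coefficients stated in \eqref{eq:adv2_final}.
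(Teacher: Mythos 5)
Your proposal is correct and is essentially the paper's own proof: your balancing choices $\alpha_j=(B_j/A_j)^{1/3}$ and $\beta=(B_\beta/A_\beta)^{1/2}$ reproduce exactly the parameters $\alpha_1,\alpha_2,\ldots,\alpha_{t-1},\beta$ that the paper substitutes into Lemma~\ref{S to Adv2}, and your geometric-series summation of the $j\geq 2$ pairs (ratio at most $\bigl(4(t-2)xy/2^{n-m}\bigr)^{1/3}\leq 1/2$ by the third hypothesis, hence a total of twice the $j=2$ term, giving the coefficient $2\sqrt[3]{100}$) together with your admissibility checks supply precisely the bookkeeping that the paper's one-line proof leaves implicit. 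One caveat, which you inherit from the paper itself rather than introduce: with the balancing $\alpha_1$ one gets $\alpha_1/2^m=\tfrac{1}{\sqrt[3]{4}}\bigl(1+\tfrac{4}{2^m}\bigr)^{-1/3}\bigl(q/2^{\frac{n+m}{2}}\bigr)^{2/3}$, which exceeds by a factor $\sqrt[3]{2}$ the coefficient $\tfrac12$ appearing in the second and fourth hypotheses of Lemma~\ref{lemma92}, so the claim that those hypotheses \emph{exactly} cover conditions \eqref{eq:assumption_2_1} and \eqref{eq:assumption_2_2} is off by this constant factor --- a discrepancy in the paper's stated conditions (harmless for Theorem~\ref{friendly_theorem2}, where slack remains), not a flaw in your method.
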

\begin{proof}
Take
\begin{align*}\alpha_1&:=\frac{2^m}{\sqrt[3]{4\left(1+\frac{4}{2^m}\right)}}\left(\frac{q}{2^{\frac{n+m}{2}}}\right)^{2/3}~,\\
\alpha_j&:=\frac{2^{jm}}{\sqrt[3]{2\left(1+2^j\right)(j-1)!^2}}\left(\frac{1}{2^{(j-1)(n-m)}}\left(\frac{q}{2^{\frac{n+m}{2}}}\right)^{j+1}\left(\frac{1}{2^{\frac{3m-n}{2}}}+\frac{q}{2^{\frac{n+m}{2}}}\right)^{j-1}\right)^{\frac{1}{3}}\end{align*}
for $2\leq j\leq t-1$, and
$$\beta:=\frac{2^{tm}}{(t-1)!}\left(\frac{q^{t+1}}{t(t+1)\cdot 2^{t(n+1)}}\right)^{\frac{1}{2}},$$
and use Lemma \ref{S to Adv2}.
\end{proof}
The proof of Theorem~\ref{friendly_theorem2} follows by taking $t=\lceil\frac{n+m}{n-m}\rceil$ in Lemma \ref{lemma92}.

\begin{proof}[Proof of Theorem \ref{friendly_theorem2}]
The inequality \eqref{thm2} clearly holds for $q\geq\frac{1}{8}2^{\frac{m+n}{2}}$ since surely $Adv\leq 1$ and 
\begin{multline*}
1=3\left(\frac{1}{8}\right)^{2/3}+2\left(\frac{1}{8}\right)\leq 3\left(\frac{q}{2^{\frac{n+m}{2}}}\right)^{2/3}+2\left(\frac{q}{2^{\frac{n+m}{2}}}\right)<\\
<3\left(\frac{q}{2^{\frac{n+m}{2}}}\right)^{2/3}+2\left(\frac{q}{2^{\frac{n+m}{2}}}\right)+5\left(\frac{q}{2^{\frac{n+m}{2}}}\right)^2+\frac{1}{2}\left(\frac{2q}{2^{\frac{n+m}{2}}}\right)^{\frac{n}{n-m}}.
\end{multline*}
We therefore assume $q<\frac{1}{8}2^{\frac{m+n}{2}}$, and let $t:=\lceil\frac{n+m}{n-m}\rceil$. Note that $5\leq m\leq n-8$, since $\frac{n}{3}<m\leq n-\log_2 n-4$.
Therefore,
\begin{align*}
\frac{2(t-1)}{2^m}&\leq\frac{2\cdot\frac{n+m}{n-m}}{2^m}=\frac{\frac{4m}{n-m}+2}{2^m}\leq\frac{m+4}{2^{m+1}}\leq\frac{9}{64},\\
\frac{2q}{2^n}&=\frac{2}{2^{\frac{n-m}{2}}}\cdot\frac{q}{2^{\frac{n+m}{2}}}\leq\frac{1}{64},
\end{align*}
hence
\begin{equation}\label{eq:c1}4\left(1+\left(\frac{2(t-1)}{2^m}+\frac{2q}{2^n}\right)^{t-2}\right)\leq 4\left(1+\left(\frac{9}{64}+\frac{1}{64}\right)\right)<5.\end{equation}
Additionaly, using also that $t\geq 3$, since $m>\frac{n}{3}$,
\begin{align}2\sqrt[3]{2}\left(1+\frac{4}{2^m}\right)^{2/3}&\leq 2\sqrt[3]{2}\left(1+\frac{4}{2^5}\right)^{2/3}<3,\label{eq:c2}\\
2\sqrt[3]{100}\left(\frac{1}{2^{n-m}}\left(\frac{1}{2^{\frac{3m-n}{2}}}+\frac{q}{2^{\frac{n+m}{2}}}\right)\right)^{\frac{1}{3}}&<2\sqrt[3]{100}\left(\frac{1}{2^8}\left(1+\frac{1}{8}\right)\right)^{\frac{1}{3}}<2,\label{eq:c3}\\
\frac{\sqrt{2}}{\sqrt{t(t+1)}}\cdot\frac{1}{2^{\frac{n-m}{4}\left(t-\frac{n+m}{n-m}\right)}}&\leq\frac{\sqrt{2}}{\sqrt{3\cdot 4}}<\frac{1}{2},\label{eq:c4}
\end{align}
and finally, since $2q/2^{\frac{n+m}{2}}<1$ and $\frac{t+1}{2}\geq\frac{n}{n+m}$,
\begin{equation}\label{eq:c5}
\left(\frac{2q}{2^{\frac{n+m}{2}}}\right)^{\frac{t+1}{2}}<\left(\frac{2q}{2^{\frac{n+m}{2}}}\right)^{\frac{n}{n+m}}.
\end{equation}
Since it is straightforward to verify that $n,m,q,t$ satisfy all the conditions of Lemma \ref{lemma92}, we get \eqref{thm2} by combining \eqref{eq:adv2_final}, \eqref{eq:c1}, \eqref{eq:c2}, \eqref{eq:c3}, \eqref{eq:c4} and \eqref{eq:c5}.
\end{proof}

\section{Discussion}

We conclude with the following note. As mentioned above, the analysis in \cite{Hall} is also based on examining the set $S$, but only for the particular choice of parameters: $t=2$, $\alpha_1=c\,q/2^{\frac{n-m+1}{2}}$, $\beta=0$. Choosing  $\alpha_1$ to be proportional to the standard deviation of $\col_2$ seems reasonable and natural (although in our analysis we employ a somewhat different choice). However, choosing  $\beta=0$ is artificial and too restrictive, and limiting $t$ to be $2$ is insufficient for getting the result for large $m$.


\begin{thebibliography}{999999999}

\bibitem{GGM}
S. Gilboa, S. Gueron, B. Morris, How many queries are needed to distinguish a truncated random permutation from a random function?, submitted, preprint available at \texttt{arXiv:1412.5204}.

\bibitem{Hall}
C. Hall, D. Wagner, J. Kelsey, B. Schneier, Building prfs from prps, in: Proceedings of
CRYPTO-98: Advances in Cryptography, Springer Verlag, 1998, pp. 370-389.

\bibitem{Stam}
A. J. Stam, Distance between sampling with and without replacement, Statist. Neerlandica {\bf 32} (1978), no.~2, 81--91. 

\end{thebibliography}
\end{document}